\def\diagram{\m@th\leftwidth=\z@ \rightwidth=\z@ \topheight=\z@
\botheight=\z@ \setbox\@picbox\hbox\bgroup}
\def\enddiagram{\egroup\wd\@picbox\rightwidth\unitlength
\ht\@picbox\topheight\unitlength \dp\@picbox\botheight\unitlength
\hskip\leftwidth\unitlength\box\@picbox}
\def\bfig{\begin{diagram}}
\def\efig{\end{diagram}}
\def\ratchet#1#2{\ifnum#1<#2 \global #1=#2 \fi}
\def\putbox(#1,#2)#3{%
\horsize{\wideness}{#3} \divide\wideness by 2
{\advance\wideness by #1 \ratchet{\rightwidth}{\wideness}}
{\advance\wideness by -#1 \ratchet{\leftwidth}{\wideness}}
\vertsize{\highness}{#3} \divide\highness by 2
{\advance\highness by #2 \ratchet{\topheight}{\highness}}
{\advance\highness by -#2 \ratchet{\botheight}{\highness}}
\put(#1,#2){\makebox(0,0){$#3$}}}
\def\putlbox(#1,#2)#3{%
\horsize{\wideness}{#3}
{\advance\wideness by #1 \ratchet{\rightwidth}{\wideness}}
{\ratchet{\leftwidth}{-#1}}
\vertsize{\highness}{#3} \divide\highness by 2
{\advance\highness by #2 \ratchet{\topheight}{\highness}}
{\advance\highness by -#2 \ratchet{\botheight}{\highness}}
\put(#1,#2){\makebox(0,0)[l]{$#3$}}}
\def\putrbox(#1,#2)#3{%
\horsize{\wideness}{#3}
{\ratchet{\rightwidth}{#1}}
{\advance\wideness by -#1 \ratchet{\leftwidth}{\wideness}}
\vertsize{\highness}{#3} \divide\highness by 2
{\advance\highness by #2 \ratchet{\topheight}{\highness}}
{\advance\highness by -#2 \ratchet{\botheight}{\highness}}
\put(#1,#2){\makebox(0,0)[r]{$#3$}}}
\def\adjust[#1]{} 
\newdimen\tempdimen
\newdimen\xlen
\newdimen\ylen
\newsavebox{\tempboxa}%
\newsavebox{\tempboxb}%
\newsavebox{\tempboxc}%
\newdimen\w@dth
\def\setw@dth#1#2{\setbox\z@\hbox{\m@th$#1$}\w@dth=\wd\z@
\setbox\@ne\hbox{\m@th$#2$}\ifnum\w@dth<\wd\@ne \w@dth=\wd\@ne \fi
\advance\w@dth by 1.2em}
\def\t@^#1_#2{\allowbreak\def\n@one{#1}\def\n@two{#2}\mathrel
{\setw@dth{#1}{#2}
\mathop{\hbox to \w@dth{\rightarrowfill}}\limits
\ifx\n@one\empty\else ^{\box\z@}\fi
\ifx\n@two\empty\else _{\box\@ne}\fi}}
\def\t@@^#1{\@ifnextchar_{\t@^{#1}}{\t@^{#1}_{}}}
\def\to{\@ifnextchar^{\t@@}{\t@@^{}}}
\def\t@left^#1_#2{\def\n@one{#1}\def\n@two{#2}\mathrel{\setw@dth{#1}{#2}
\mathop{\hbox to \w@dth{\leftarrowfill}}\limits
\ifx\n@one\empty\else ^{\box\z@}\fi
\ifx\n@two\empty\else _{\box\@ne}\fi}}
\def\t@@left^#1{\@ifnextchar_{\t@left^{#1}}{\t@left^{#1}_{}}}
\def\toleft{\@ifnextchar^{\t@@left}{\t@@left^{}}}
\def\two@^#1_#2{\allowbreak
\def\n@one{#1}\def\n@two{#2}\mathrel{\setw@dth{#1}{#2}
\mathop{\vcenter{\lineskip\z@\baselineskip\z@
                 \hbox to \w@dth{\rightarrowfill}%
                 \hbox to \w@dth{\rightarrowfill}}%
       }\limits
\ifx\n@one\empty\else ^{\box\z@}\fi
\ifx\n@two\empty\else _{\box\@ne}\fi}}
\def\tw@@^#1{\@ifnextchar _{\two@^{#1}}{\two@^{#1}_{}}}
\def\two{\@ifnextchar ^{\tw@@}{\tw@@^{}}}
\def\tofr@^#1_#2{\def\n@one{#1}\def\n@two{#2}\mathrel{\setw@dth{#1}{#2}
\mathop{\vcenter{\hbox to \w@dth{\rightarrowfill}\kern-1.7ex
                 \hbox to \w@dth{\leftarrowfill}}%
       }\limits
\ifx\n@one\empty\else ^{\box\z@}\fi
\ifx\n@two\empty\else _{\box\@ne}\fi}}
\def\t@fr@^#1{\@ifnextchar_ {\tofr@^{#1}}{\tofr@^{#1}_{}}}
\def\tofro{\@ifnextchar^ {\t@fr@}{\t@fr@^{}}}
\def\mon{\mathop{\m@th\hbox to
      14.6\P@{\lasyb\char'51\hskip-2.1\P@$\arrext$\hss
$\mathord\rightarrow$}}\limits} 
\def\leftmono{\mathrel{\m@th\hbox to
14.6\P@{$\mathord\leftarrow$\hss$\arrext$\hskip-2.1\P@\lasyb\char'50%
}}\limits} 
\mathchardef\arrext="0200       
\def\settypes(#1,#2,#3){\arrowtypea#1 \arrowtypeb#2 \arrowtypec#3}
\def\settoheight#1#2{\setbox\@tempboxa\hbox{#2}#1\ht\@tempboxa\relax}%
\def\settodepth#1#2{\setbox\@tempboxa\hbox{#2}#1\dp\@tempboxa\relax}%
\def\settokens`#1`#2`#3`#4`{%
     \def\tokena{#1}\def\tokenb{#2}\def\tokenc{#3}\def\tokend{#4}}
\def\setsqparms[#1`#2`#3`#4;#5`#6]{%
\arrowtypea #1
\arrowtypeb #2
\arrowtypec #3
\arrowtyped #4
\width #5
\height #6
}
\def\setpos(#1,#2){\xpos=#1 \ypos#2}
\def\settriparms[#1`#2`#3;#4]{\settripairparms[#1`#2`#3`1`1;#4]}%
\def\settripairparms[#1`#2`#3`#4`#5;#6]{%
\arrowtypea #1
\arrowtypeb #2
\arrowtypec #3
\arrowtyped #4
\arrowtypee #5
\width #6
\height #6
}
\def\resetparms{\settripairparms[1`1`1`1`1;500]\width 500}
\def\mvector(#1,#2)#3{
\put(0,0){\vector(#1,#2){#3}}%
\put(0,0){\vector(#1,#2){26}}%
}
\def\evector(#1,#2)#3{{
\arrowlength #3
\put(0,0){\vector(#1,#2){\arrowlength}}%
\advance \arrowlength by-30
\put(0,0){\vector(#1,#2){\arrowlength}}%
}}
\def\horsize#1#2{%
\settowidth{\tempdimen}{$#2$}%
#1=\tempdimen
\divide #1 by\unitlength
}
\def\vertsize#1#2{%
\settoheight{\tempdimen}{$#2$}%
#1=\tempdimen
\settodepth{\tempdimen}{$#2$}%
\advance #1 by\tempdimen
\divide #1 by\unitlength
}
\def\putvector(#1,#2)(#3,#4)#5#6{{%
\ifnum3<\arrowtype
\putdashvector(#1,#2)(#3,#4)#5\arrowtype
\else
\ifnum\arrowtype<-3
\putdashvector(#1,#2)(#3,#4)#5\arrowtype
\else
\xpos=#1
\ypos=#2
\run=#3
\rise=#4
\arrowlength=#5
\ifnum \arrowtype<0
    \ifnum \run=0
        \advance \ypos by-\arrowlength
    \else
        \tempcounta \arrowlength
        \multiply \tempcounta by\rise
        \divide \tempcounta by\run
        \ifnum\run>0
            \advance \xpos by\arrowlength
            \advance \ypos by\tempcounta
        \else
            \advance \xpos by-\arrowlength
            \advance \ypos by-\tempcounta
        \fi
    \fi
    \multiply \arrowtype by-1
    \multiply \rise by-1
    \multiply \run by-1
\fi
\ifcase \arrowtype
\or \put(\xpos,\ypos){\vector(\run,\rise){\arrowlength}}%
\or \put(\xpos,\ypos){\mvector(\run,\rise)\arrowlength}%
\or \put(\xpos,\ypos){\evector(\run,\rise){\arrowlength}}%
\fi\fi\fi
}}
\def\putsplitvector(#1,#2)#3#4{
\xpos #1
\ypos #2
\arrowtype #4
\halflength #3
\arrowlength #3
\gap 140
\advance \halflength by-\gap
\divide \halflength by2
\ifnum\arrowtype>0
   \ifcase \arrowtype
   \or \put(\xpos,\ypos){\line(0,-1){\halflength}}%
       \advance\ypos by-\halflength
       \advance\ypos by-\gap
       \put(\xpos,\ypos){\vector(0,-1){\halflength}}%
   \or \put(\xpos,\ypos){\line(0,-1)\halflength}%
       \put(\xpos,\ypos){\vector(0,-1)3}%
       \advance\ypos by-\halflength
       \advance\ypos by-\gap
       \put(\xpos,\ypos){\vector(0,-1){\halflength}}%
   \or \put(\xpos,\ypos){\line(0,-1)\halflength}%
       \advance\ypos by-\halflength
       \advance\ypos by-\gap
       \put(\xpos,\ypos){\evector(0,-1){\halflength}}%
   \fi
\else \arrowtype=-\arrowtype
   \ifcase\arrowtype
   \or \advance \ypos by-\arrowlength
       \put(\xpos,\ypos){\line(0,1){\halflength}}%
       \advance\ypos by\halflength
       \advance\ypos by\gap
       \put(\xpos,\ypos){\vector(0,1){\halflength}}%
   \or \advance \ypos by-\arrowlength
       \put(\xpos,\ypos){\line(0,1)\halflength}%
       \put(\xpos,\ypos){\vector(0,1)3}%
       \advance\ypos by\halflength
       \advance\ypos by\gap
       \put(\xpos,\ypos){\vector(0,1){\halflength}}%
   \or \advance \ypos by-\arrowlength
       \put(\xpos,\ypos){\line(0,1)\halflength}%
       \advance\ypos by\halflength
       \advance\ypos by\gap
       \put(\xpos,\ypos){\evector(0,1){\halflength}}%
   \fi
\fi
}
\def\putmorphism(#1)(#2,#3)[#4`#5`#6]#7#8#9{{%
\run #2
\rise #3
\ifnum\rise=0
  \puthmorphism(#1)[#4`#5`#6]{#7}{#8}#9%
\else\ifnum\run=0
  \putvmorphism(#1)[#4`#5`#6]{#7}{#8}#9%
\else
\setpos(#1)%
\arrowlength #7
\arrowtype #8
\ifnum\run=0
\else\ifnum\rise=0
\else
\ifnum\run>0
    \coefa=1
\else
   \coefa=-1
\fi
\ifnum\arrowtype>0
   \coefb=0
   \coefc=-1
\else
   \coefb=\coefa
   \coefc=1
   \arrowtype=-\arrowtype
\fi
\width=2
\multiply \width by\run
\divide \width by\rise
\ifnum \width<0  \width=-\width\fi
\advance\width by60
\if l#9 \width=-\width\fi
\putbox(\xpos,\ypos){#4}
{\multiply \coefa by\arrowlength
\advance\xpos by\coefa
\multiply \coefa by\rise
\divide \coefa by\run
\advance \ypos by\coefa
\putbox(\xpos,\ypos){#5} }%
{\multiply \coefa by\arrowlength
\divide \coefa by2
\advance \xpos by\coefa
\advance \xpos by\width
\multiply \coefa by\rise
\divide \coefa by\run
\advance \ypos by\coefa
\if l#9%
   \putrbox(\xpos,\ypos){#6}%
\else\if r#9%
   \putlbox(\xpos,\ypos){#6}%
\fi\fi }%
{\multiply \rise by-\coefc
\multiply \run by-\coefc
\multiply \coefb by\arrowlength
\advance \xpos by\coefb
\multiply \coefb by\rise
\divide \coefb by\run
\advance \ypos by\coefb
\multiply \coefc by70
\advance \ypos by\coefc
\multiply \coefc by\run
\divide \coefc by\rise
\advance \xpos by\coefc
\multiply \coefa by140
\multiply \coefa by\run
\divide \coefa by\rise
\advance \arrowlength by\coefa
\ifcase\arrowtype
\or \put(\xpos,\ypos){\vector(\run,\rise){\arrowlength}}%
\or \put(\xpos,\ypos){\mvector(\run,\rise){\arrowlength}}%
\or \put(\xpos,\ypos){\evector(\run,\rise){\arrowlength}}%
\fi}\fi\fi\fi\fi}}
\def\howmanydashes{
\numbdashes=\arrowlength \lengthdash=40
\divide\numbdashes by \lengthdash
\lengthdash=\arrowlength
\divide\lengthdash by \numbdashes
\increment=\lengthdash
\multiply\lengthdash by 3
\divide\lengthdash by 5
}
\def\putdashvector(#1)(#2,#3)#4#5{%
\ifnum#3=0 \putdashhvector(#1){#4}#5
\else
\ifnum#2=0
\putdashvvector(#1){#4}#5\fi\fi}
\def\putdashhvector(#1,#2)#3#4{{%
\arrowlength=#3 \howmanydashes
\multiput(#1,#2)(\increment,0){\numbdashes}%
{\vrule height .4pt width \lengthdash\unitlength}
\arrowtype=#4 \xpos=#1
\ifnum\arrowtype<0 \advance\arrowtype by 7 \fi
\ifcase\arrowtype
\or \advance\xpos by 10
    \put(\xpos,#2){\vector(-1,0){\lengthdash}}
    \advance\xpos by 40
    \put(\xpos,#2){\vector(-1,0){\lengthdash}}
\or \advance \xpos by 10
    \put(\xpos,#2){\vector(-1,0){\lengthdash}}
    \advance\xpos by  \arrowlength
    \advance\xpos by  -50
    \put(\xpos,#2){\vector(-1,0){\lengthdash}}
\or \advance\xpos by 10
    \put(\xpos,#2){\vector(-1,0){\lengthdash}}
\or \advance\xpos by \arrowlength
    \advance\xpos by -\lengthdash
    \put(\xpos,#2){\vector(1,0){\lengthdash}}
\or {\advance\xpos by 10
    \put(\xpos,#2){\vector(1,0){\lengthdash}}}
    \advance\xpos by \arrowlength
    \advance\xpos by -\lengthdash
    \put(\xpos,#2){\vector(1,0){\lengthdash}}
\or \advance\xpos by \arrowlength
    \advance\xpos by -\lengthdash
    \put(\xpos,#2){\vector(1,0){\lengthdash}}
    \advance\xpos by -40
    \put(\xpos,#2){\vector(1,0){\lengthdash}}
   \fi
}}
\def\putdashvvector(#1,#2)#3#4{{%
\arrowlength=#3 \howmanydashes
\ypos=#2 \advance\ypos by -\arrowlength
\multiput(#1,#2)(0,\increment){\numbdashes}%
    {\vrule width .4pt height \lengthdash\unitlength}
\arrowtype=#4 \ypos=#2
\ifnum\arrowtype<0 \advance\arrowtype by 7 \fi
\ifcase\arrowtype
\or \advance\ypos by \arrowlength \advance\ypos by -40
    \put(#1,\ypos){\vector(0,1){\lengthdash}}
    \advance\ypos by -40
    \put(#1,\ypos){\vector(0,1){\lengthdash}}
\or \advance\ypos by 10
    \put(#1,\ypos){\vector(0,1){\lengthdash}}
    \advance\ypos by \arrowlength \advance\ypos by -40
    \put(#1,\ypos){\vector(0,1){\lengthdash}}
\or \advance\ypos by \arrowlength \advance\ypos by -40
    \put(#1,\ypos){\vector(0,1){\lengthdash}}
\or \advance\ypos by 10
    \put(#1,\ypos){\vector(0,-1){\lengthdash}}
\or \advance\ypos by 10
    \put(#1,\ypos){\vector(0,-1){\lengthdash}}
    \advance\ypos by \arrowlength \advance\ypos by -40
    \put(#1,\ypos){\vector(0,-1){\lengthdash}}
\or \advance\ypos by 10
    \put(#1,\ypos){\vector(0,-1){\lengthdash}}
    \advance\ypos by 40
    \put(#1,\ypos){\vector(0,-1){\lengthdash}}
\fi
}}
\def\puthmorphism(#1,#2)[#3`#4`#5]#6#7#8{{%
\xpos #1
\ypos #2
\width #6
\arrowlength #6
\arrowtype=#7
\putbox(\xpos,\ypos){#3\vphantom{#4}}%
{\advance \xpos by\arrowlength
\putbox(\xpos,\ypos){\vphantom{#3}#4}}%
\horsize{\tempcounta}{#3}%
\horsize{\tempcountb}{#4}%
\divide \tempcounta by2
\divide \tempcountb by2
\advance \tempcounta by30
\advance \tempcountb by30
\advance \xpos by\tempcounta
\advance \arrowlength by-\tempcounta
\advance \arrowlength by-\tempcountb
\putvector(\xpos,\ypos)(1,0)\arrowlength\arrowtype
\divide \arrowlength by2
\advance \xpos by\arrowlength
\vertsize{\tempcounta}{#5}%
\divide\tempcounta by2
\advance \tempcounta by20
\if a#8 %
   \advance \ypos by\tempcounta
   \putbox(\xpos,\ypos){#5}%
\else
   \advance \ypos by-\tempcounta
   \putbox(\xpos,\ypos){#5}%
\fi}}
\def\putvmorphism(#1,#2)[#3`#4`#5]#6#7#8{{%
\xpos #1
\ypos #2
\arrowlength #6
\arrowtype #7
\settowidth{\xlen}{$#5$}%
\putbox(\xpos,\ypos){#3}%
{\advance \ypos by-\arrowlength
\putbox(\xpos,\ypos){#4}}%
{\advance\arrowlength by-140
\advance \ypos by-70
\ifdim\xlen>0pt
   \if m#8%
      \putsplitvector(\xpos,\ypos)\arrowlength\arrowtype
   \else
   \putvector(\xpos,\ypos)(0,-1)\arrowlength\arrowtype
   \fi
\else
   \putvector(\xpos,\ypos)(0,-1)\arrowlength\arrowtype
\fi}%
\ifdim\xlen>0pt
   \divide \arrowlength by2
   \advance\ypos by-\arrowlength
   \if l#8%
      \advance \xpos by-40
      \putrbox(\xpos,\ypos){#5}%
   \else\if r#8%
      \advance \xpos by40
      \putlbox(\xpos,\ypos){#5}%
   \else
      \putbox(\xpos,\ypos){#5}%
   \fi\fi
\fi
}}
\def\putsquarep<#1>(#2)[#3;#4`#5`#6`#7]{{%
\setsqparms[#1]%
\setpos(#2)%
\settokens`#3`%
\puthmorphism(\xpos,\ypos)[\tokenc`\tokend`{#7}]{\width}{\arrowtyped}b%
\advance\ypos by \height
\puthmorphism(\xpos,\ypos)[\tokena`\tokenb`{#4}]{\width}{\arrowtypea}a%
\putvmorphism(\xpos,\ypos)[``{#5}]{\height}{\arrowtypeb}l%
\advance\xpos by \width
\putvmorphism(\xpos,\ypos)[``{#6}]{\height}{\arrowtypec}r%
}}
\def\putsquare{\@ifnextchar <{\putsquarep}{\putsquarep%
   <\arrowtypea`\arrowtypeb`\arrowtypec`\arrowtyped;\width`\height>}}
\def\square{\@ifnextchar< {\squarep}{\squarep
   <\arrowtypea`\arrowtypeb`\arrowtypec`\arrowtyped;\width`\height>}}
\def\squarep<#1>[#2`#3`#4`#5;#6`#7`#8`#9]{{
\setsqparms[#1]
\diagram
\putsquarep<\arrowtypea`\arrowtypeb`\arrowtypec`
\arrowtyped;\width`\height>
(0,0)[#2`#3`#4`{#5};#6`#7`#8`{#9}]
\enddiagram
}}                                                 
\def\putptrianglep<#1>(#2,#3)[#4`#5`#6;#7`#8`#9]{{%
\settriparms[#1]%
\xpos=#2 \ypos=#3
\advance\ypos by \height
\puthmorphism(\xpos,\ypos)[#4`#5`{#7}]{\height}{\arrowtypea}a%
\putvmorphism(\xpos,\ypos)[`#6`{#8}]{\height}{\arrowtypeb}l%
\advance\xpos by\height
\putmorphism(\xpos,\ypos)(-1,-1)[``{#9}]{\height}{\arrowtypec}r%
}}
\def\putptriangle{\@ifnextchar <{\putptrianglep}{\putptrianglep
   <\arrowtypea`\arrowtypeb`\arrowtypec;\height>}}
\def\ptriangle{\@ifnextchar <{\ptrianglep}{\ptrianglep
   <\arrowtypea`\arrowtypeb`\arrowtypec;\height>}}
\def\ptrianglep<#1>[#2`#3`#4;#5`#6`#7]{{
\settriparms[#1]
\diagram
\putptrianglep<\arrowtypea`\arrowtypeb`
\arrowtypec;\height>
(0,0)[#2`#3`#4;#5`#6`{#7}]
\enddiagram
}}                                            
\def\putqtrianglep<#1>(#2,#3)[#4`#5`#6;#7`#8`#9]{{%
\settriparms[#1]%
\xpos=#2 \ypos=#3
\advance\ypos by\height
\puthmorphism(\xpos,\ypos)[#4`#5`{#7}]{\height}{\arrowtypea}a%
\putmorphism(\xpos,\ypos)(1,-1)[``{#8}]{\height}{\arrowtypeb}l%
\advance\xpos by\height
\putvmorphism(\xpos,\ypos)[`#6`{#9}]{\height}{\arrowtypec}r%
}}
\def\putqtriangle{\@ifnextchar <{\putqtrianglep}{\putqtrianglep
   <\arrowtypea`\arrowtypeb`\arrowtypec;\height>}}
\def\qtriangle{\@ifnextchar <{\qtrianglep}{\qtrianglep
   <\arrowtypea`\arrowtypeb`\arrowtypec;\height>}}
\def\qtrianglep<#1>[#2`#3`#4;#5`#6`#7]{{
\settriparms[#1]
\width=\height                                
\diagram
\putqtrianglep<\arrowtypea`\arrowtypeb`
\arrowtypec;\height>
(0,0)[#2`#3`#4;#5`#6`{#7}]
\enddiagram
}}
\def\putdtrianglep<#1>(#2,#3)[#4`#5`#6;#7`#8`#9]{{%
\settriparms[#1]%
\xpos=#2 \ypos=#3
\puthmorphism(\xpos,\ypos)[#5`#6`{#9}]{\height}{\arrowtypec}b%
\advance\xpos by \height \advance\ypos by\height
\putmorphism(\xpos,\ypos)(-1,-1)[``{#7}]{\height}{\arrowtypea}l%
\putvmorphism(\xpos,\ypos)[#4``{#8}]{\height}{\arrowtypeb}r%
}}
\def\putdtriangle{\@ifnextchar <{\putdtrianglep}{\putdtrianglep
   <\arrowtypea`\arrowtypeb`\arrowtypec;\height>}}
\def\dtriangle{\@ifnextchar <{\dtrianglep}{\dtrianglep
   <\arrowtypea`\arrowtypeb`\arrowtypec;\height>}}
\def\dtrianglep<#1>[#2`#3`#4;#5`#6`#7]{{
\settriparms[#1]
\width=\height                                
\diagram
\putdtrianglep<\arrowtypea`\arrowtypeb`
\arrowtypec;\height>
(0,0)[#2`#3`#4;#5`#6`{#7}]
\enddiagram
}}
\def\putbtrianglep<#1>(#2,#3)[#4`#5`#6;#7`#8`#9]{{%
\settriparms[#1]%
\xpos=#2 \ypos=#3
\puthmorphism(\xpos,\ypos)[#5`#6`{#9}]{\height}{\arrowtypec}b%
\advance\ypos by\height
\putmorphism(\xpos,\ypos)(1,-1)[``{#8}]{\height}{\arrowtypeb}r%
\putvmorphism(\xpos,\ypos)[#4``{#7}]{\height}{\arrowtypea}l%
}}
\def\putbtriangle{\@ifnextchar <{\putbtrianglep}{\putbtrianglep
   <\arrowtypea`\arrowtypeb`\arrowtypec;\height>}}
\def\btriangle{\@ifnextchar <{\btrianglep}{\btrianglep
   <\arrowtypea`\arrowtypeb`\arrowtypec;\height>}}
\def\btrianglep<#1>[#2`#3`#4;#5`#6`#7]{{
\settriparms[#1]
\width=\height                               
\diagram
\putbtrianglep<\arrowtypea`\arrowtypeb`
\arrowtypec;\height>
(0,0)[#2`#3`#4;#5`#6`{#7}]
\enddiagram
}}
\def\putAtrianglep<#1>(#2,#3)[#4`#5`#6;#7`#8`#9]{{%
\settriparms[#1]%
\xpos=#2 \ypos=#3
{\multiply \height by2
\puthmorphism(\xpos,\ypos)[#5`#6`{#9}]{\height}{\arrowtypec}b}%
\advance\xpos by\height \advance\ypos by\height
\putmorphism(\xpos,\ypos)(-1,-1)[#4``{#7}]{\height}{\arrowtypea}l%
\putmorphism(\xpos,\ypos)(1,-1)[``{#8}]{\height}{\arrowtypeb}r%
}}
\def\putAtriangle{\@ifnextchar <{\putAtrianglep}{\putAtrianglep
   <\arrowtypea`\arrowtypeb`\arrowtypec;\height>}}
\def\Atriangle{\@ifnextchar <{\Atrianglep}{\Atrianglep
   <\arrowtypea`\arrowtypeb`\arrowtypec;\height>}}
\def\Atrianglep<#1>[#2`#3`#4;#5`#6`#7]{{
\settriparms[#1]
\width=\height                                     
\diagram
\putAtrianglep<\arrowtypea`\arrowtypeb`
\arrowtypec;\height>
(0,0)[#2`#3`#4;#5`#6`{#7}]
\enddiagram
}}
\def\putAtrianglepairp<#1>(#2)[#3;#4`#5`#6`#7`#8]{{%
\settripairparms[#1]%
\setpos(#2)%
\settokens`#3`%
\puthmorphism(\xpos,\ypos)[\tokenb`\tokenc`{#7}]{\height}{\arrowtyped}b%
\advance\xpos by\height
\puthmorphism(\xpos,\ypos)[\phantom{\tokenc}`\tokend`{#8}]%
{\height}{\arrowtypee}b%
\advance\ypos by\height
\putmorphism(\xpos,\ypos)(-1,-1)[\tokena``{#4}]{\height}{\arrowtypea}l%
\putvmorphism(\xpos,\ypos)[``{#5}]{\height}{\arrowtypeb}m%
\putmorphism(\xpos,\ypos)(1,-1)[``{#6}]{\height}{\arrowtypec}r%
}}
\def\putAtrianglepair{\@ifnextchar <{\putAtrianglepairp}{\putAtrianglepairp%
   <\arrowtypea`\arrowtypeb`\arrowtypec`\arrowtyped`\arrowtypee;\height>}}
\def\Atrianglepair{\@ifnextchar <{\Atrianglepairp}{\Atrianglepairp%
   <\arrowtypea`\arrowtypeb`\arrowtypec`\arrowtyped`\arrowtypee;\height>}}
\def\Atrianglepairp<#1>[#2;#3`#4`#5`#6`#7]{{
\settripairparms[#1]
\settokens`#2`
\width=\height                                
\diagram
\putAtrianglepairp                            
<\arrowtypea`\arrowtypeb`\arrowtypec`
\arrowtyped`\arrowtypee;\height>
(0,0)[{#2};#3`#4`#5`#6`{#7}]
\enddiagram
}}
\def\putVtrianglep<#1>(#2,#3)[#4`#5`#6;#7`#8`#9]{{%
\settriparms[#1]%
\xpos=#2 \ypos=#3
\advance\ypos by\height
{\multiply\height by2
\puthmorphism(\xpos,\ypos)[#4`#5`{#7}]{\height}{\arrowtypea}a}%
\putmorphism(\xpos,\ypos)(1,-1)[`#6`{#8}]{\height}{\arrowtypeb}l%
\advance\xpos by\height
\advance\xpos by\height
\putmorphism(\xpos,\ypos)(-1,-1)[``{#9}]{\height}{\arrowtypec}r%
}}
\def\putVtriangle{\@ifnextchar <{\putVtrianglep}{\putVtrianglep
   <\arrowtypea`\arrowtypeb`\arrowtypec;\height>}}
\def\Vtriangle{\@ifnextchar <{\Vtrianglep}{\Vtrianglep
   <\arrowtypea`\arrowtypeb`\arrowtypec;\height>}}
\def\Vtrianglep<#1>[#2`#3`#4;#5`#6`#7]{{
\settriparms[#1]
\width=\height                                 
\diagram
\putVtrianglep<\arrowtypea`\arrowtypeb`
\arrowtypec;\height>
(0,0)[#2`#3`#4;#5`#6`{#7}]
\enddiagram
}}
\def\putVtrianglepairp<#1>(#2)[#3;#4`#5`#6`#7`#8]{{
\settripairparms[#1]%
\setpos(#2)%
\settokens`#3`%
\advance\ypos by\height
\putmorphism(\xpos,\ypos)(1,-1)[`\tokend`{#6}]{\height}{\arrowtypec}l%
\puthmorphism(\xpos,\ypos)[\tokena`\tokenb`{#4}]{\height}{\arrowtypea}a%
\advance\xpos by\height
\puthmorphism(\xpos,\ypos)[\phantom{\tokenb}`\tokenc`{#5}]%
{\height}{\arrowtypeb}a%
\putvmorphism(\xpos,\ypos)[``{#7}]{\height}{\arrowtyped}m%
\advance\xpos by\height
\putmorphism(\xpos,\ypos)(-1,-1)[``{#8}]{\height}{\arrowtypee}r%
}}
\def\putVtrianglepair{\@ifnextchar <{\putVtrianglepairp}{\putVtrianglepairp%
    <\arrowtypea`\arrowtypeb`\arrowtypec`\arrowtyped`\arrowtypee;\height>}}
\def\Vtrianglepair{\@ifnextchar <{\Vtrianglepairp}{\Vtrianglepairp%
    <\arrowtypea`\arrowtypeb`\arrowtypec`\arrowtyped`\arrowtypee;\height>}}
\def\Vtrianglepairp<#1>[#2;#3`#4`#5`#6`#7]{{
\settripairparms[#1]
\settokens`#2`
\diagram
\putVtrianglepairp                             
<\arrowtypea`\arrowtypeb`\arrowtypec`
\arrowtyped`\arrowtypee;\height>
(0,0)[{#2};#3`#4`#5`#6`{#7}]
\enddiagram
}}
\def\putCtrianglep<#1>(#2,#3)[#4`#5`#6;#7`#8`#9]{{%
\settriparms[#1]%
\xpos=#2 \ypos=#3
\advance\ypos by\height
\putmorphism(\xpos,\ypos)(1,-1)[``{#9}]{\height}{\arrowtypec}l%
\advance\xpos by\height
\advance\ypos by\height
\putmorphism(\xpos,\ypos)(-1,-1)[#4`#5`{#7}]{\height}{\arrowtypea}l%
{\multiply\height by 2
\putvmorphism(\xpos,\ypos)[`#6`{#8}]{\height}{\arrowtypeb}r}%
}}
\def\putCtriangle{\@ifnextchar <{\putCtrianglep}{\putCtrianglep
    <\arrowtypea`\arrowtypeb`\arrowtypec;\height>}}
\def\Ctriangle{\@ifnextchar <{\Ctrianglep}{\Ctrianglep
    <\arrowtypea`\arrowtypeb`\arrowtypec;\height>}}
\def\Ctrianglep<#1>[#2`#3`#4;#5`#6`#7]{{
\settriparms[#1]
\width=\height                               
\diagram
\putCtrianglep<\arrowtypea`\arrowtypeb`
\arrowtypec;\height>
(0,0)[#2`#3`#4;#5`#6`{#7}]
\enddiagram
}}                                           
\def\putDtrianglep<#1>(#2,#3)[#4`#5`#6;#7`#8`#9]{{%
\settriparms[#1]%
\xpos=#2 \ypos=#3
\advance\xpos by\height \advance\ypos by\height
\putmorphism(\xpos,\ypos)(-1,-1)[``{#9}]{\height}{\arrowtypec}r%
\advance\xpos by-\height \advance\ypos by\height
\putmorphism(\xpos,\ypos)(1,-1)[`#5`{#8}]{\height}{\arrowtypeb}r%
{\multiply\height by 2
\putvmorphism(\xpos,\ypos)[#4`#6`{#7}]{\height}{\arrowtypea}l}%
}}
\def\putDtriangle{\@ifnextchar <{\putDtrianglep}{\putDtrianglep
    <\arrowtypea`\arrowtypeb`\arrowtypec;\height>}}
\def\Dtriangle{\@ifnextchar <{\Dtrianglep}{\Dtrianglep
   <\arrowtypea`\arrowtypeb`\arrowtypec;\height>}}
\def\Dtrianglep<#1>[#2`#3`#4;#5`#6`#7]{{
\settriparms[#1]
\width=\height                              
\diagram
\putDtrianglep<\arrowtypea`\arrowtypeb`
\arrowtypec;\height>
(0,0)[#2`#3`#4;#5`#6`{#7}]
\enddiagram
}}                                          
\def\setrecparms[#1`#2]{\width=#1 \height=#2}%
\def\recursep<#1`#2>[#3;#4`#5`#6`#7`#8]{{\m@th
\width=#1 \height=#2
\settokens`#3`
\settowidth{\tempdimen}{$\tokena$}
\ifdim\tempdimen=0pt
  \savebox{\tempboxa}{\hbox{$\tokenb$}}%
  \savebox{\tempboxb}{\hbox{$\tokend$}}%
  \savebox{\tempboxc}{\hbox{$#6$}}%
\else
  \savebox{\tempboxa}{\hbox{$\hbox{$\tokena$}\times\hbox{$\tokenb$}$}}%
  \savebox{\tempboxb}{\hbox{$\hbox{$\tokena$}\times\hbox{$\tokend$}$}}%
  \savebox{\tempboxc}{\hbox{$\hbox{$\tokena$}\times\hbox{$#6$}$}}%
\fi
\ypos=\height
\divide\ypos by 2
\xpos=\ypos
\advance\xpos by \width
\bfig
\putCtrianglep<-1`1`1;\ypos>(0,0)[`\tokenc`;#5`#6`{#7}]%
\puthmorphism(\ypos,0)[\tokend`\usebox{\tempboxb}`{#8}]{\width}{-1}b%
\puthmorphism(\ypos,\height)[\tokenb`\usebox{\tempboxa}`{#4}]{\width}{-1}a%
\advance\ypos by \width
\putvmorphism(\ypos,\height)[``\usebox{\tempboxc}]{\height}1r%
\efig
}}
\def\recurse{\@ifnextchar <{\recursep}{\recursep<\width`\height>}}
\def\puttwohmorphisms(#1,#2)[#3`#4;#5`#6]#7#8#9{{%
%
\puthmorphism(#1,#2)[#3`#4`]{#7}0a
\ypos=#2
\advance\ypos by 20
\puthmorphism(#1,\ypos)[\phantom{#3}`\phantom{#4}`#5]{#7}{#8}a
\advance\ypos by -40
\puthmorphism(#1,\ypos)[\phantom{#3}`\phantom{#4}`#6]{#7}{#9}b
}}
\def\puttwovmorphisms(#1,#2)[#3`#4;#5`#6]#7#8#9{{%
%
%
\putvmorphism(#1,#2)[#3`#4`]{#7}0a
\xpos=#1
\advance\xpos by -20
\putvmorphism(\xpos,#2)[\phantom{#3}`\phantom{#4}`#5]{#7}{#8}l
\advance\xpos by 40
\putvmorphism(\xpos,#2)[\phantom{#3}`\phantom{#4}`#6]{#7}{#9}r
}}
\def\puthcoequalizer(#1)[#2`#3`#4;#5`#6`#7]#8#9{{%
%
\setpos(#1)%
\puttwohmorphisms(\xpos,\ypos)[#2`#3;#5`#6]{#8}11%
\advance\xpos by #8
\puthmorphism(\xpos,\ypos)[\phantom{#3}`#4`#7]{#8}1{#9}
}}
\def\putvcoequalizer(#1)[#2`#3`#4;#5`#6`#7]#8#9{{%
%
%
\setpos(#1)%
\puttwovmorphisms(\xpos,\ypos)[#2`#3;#5`#6]{#8}11%
\advance\ypos by -#8
\putvmorphism(\xpos,\ypos)[\phantom{#3}`#4`#7]{#8}1{#9}
}}
\def\putthreehmorphisms(#1)[#2`#3;#4`#5`#6]#7(#8)#9{{%
\setpos(#1) \settypes(#8)
\if a#9 %
     \vertsize{\tempcounta}{#5}%
     \vertsize{\tempcountb}{#6}%
     \ifnum \tempcounta<\tempcountb \tempcounta=\tempcountb \fi
\else
     \vertsize{\tempcounta}{#4}%
     \vertsize{\tempcountb}{#5}%
     \ifnum \tempcounta<\tempcountb \tempcounta=\tempcountb \fi
\fi
\advance \tempcounta by 60
\puthmorphism(\xpos,\ypos)[#2`#3`#5]{#7}{\arrowtypeb}{#9}
\advance\ypos by \tempcounta
\puthmorphism(\xpos,\ypos)[\phantom{#2}`\phantom{#3}`#4]{#7}{\arrowtypea}{#9}
\advance\ypos by -\tempcounta \advance\ypos by -\tempcounta
\puthmorphism(\xpos,\ypos)[\phantom{#2}`\phantom{#3}`#6]{#7}{\arrowtypec}{#9}
}}
\def\setarrowtoks[#1`#2`#3`#4`#5`#6]{%
\def\toka{#1}
\def\tokb{#2}
\def\tokc{#3}
\def\tokd{#4}
\def\toke{#5}
\def\tokf{#6}
}
\def\hex{\@ifnextchar <{\hexp}{\hexp<1000`400>}}
\def\hexp<#1`#2>[#3`#4`#5`#6`#7`#8;#9]{%
\setarrowtoks[#9]
\yext=#2 \advance \yext by #2
\xext=#1 \advance\xext by \yext
\bfig
\putCtriangle<-1`0`1;#2>(0,0)[`#5`;\tokb``\tokd]
\xext=#1 \yext=#2 \advance \yext by #2
\putsquare<1`0`0`1;\xext`\yext>(#2,0)[#3`#4`#7`#8;\toka```\tokf]
\advance \xext by #2
\putDtriangle<0`1`-1;#2>(\xext,0)[`#6`;`\tokc`\toke]
\efig
}
\newtheorem{thm}{Theorem}
\newtheorem{prop}[thm]{Proposition}
\newtheorem{cor}[thm]{Corollary}
\newtheorem{lem}[thm]{Lemma}
\newenvironment{rem}[1]{\noindent {\em Remark.} #1}{}
\newenvironment{exa}[1]{\noindent {\em Example.} #1}{}
\newcommand{\ham}{\textup{Ham}}
\newcommand{\symp}{\textup{Symp}}
\newcommand{\fl}{\textup{Flux}}
\title{Bounded symplectic diffeomorphisms and
split flux groups}
\author{Carlos Campos-Apanco}
\address{CIMAT\\
      Jalisco S/N, Col. Valenciana \\
      Guanajuato, Gto., Mexico 36240}
\email{carlosca@cimat.mx}
\author{Andr\'es Pedroza}
\address{Facultad de Ciencias\\
         Universidad de Colima\\
     Bernal D\'{\i}az del Castillo No. 340\\
         Colima, Col., Mexico 28045}
\email{andres\_pedroza@ucol.mx}
\keywords{Hamiltonian group, Hofer metric, flux morphism}
\thanks{The authors were supported by CONACYT grant No. 50662 }
\subjclass[2010]{Primary: 53D35 57R17}       
\begin{document}

\maketitle

\begin{abstract}
We prove the bounded isometry conjecture of F. Lalonde and L. Polterovich
for a special class of closed symplectic manifolds. 
As a byproduct,  it is shown that
the flux group of a product of these special symplectic manifold is isomorphic
to the direct sum of the flux group of each symplectic manifold.
\end{abstract}

\maketitle

\section{Introduction}

For a closed symplectic manifold $(M,\omega)$, the group $\ham(M,\omega)$ of Hamiltonian
diffeomorphisms  carries a norm called the Hofer norm.
The group $\ham(M,\omega)$  is a normal
subgroup of $\symp_0(M,\omega)$, the group of symplectic diffeomorphisms,
and the Hofer norm is invariant
under conjugation by $\symp_0(M,\omega)$.
For a fixed symplectic diffeomorphisms $\psi$,
the map $\mathcal{C}_\psi:\ham(M,\omega)
\to \ham(M,\omega)$ defined by $\mathcal{C}_\psi(h)=\psi\circ h\circ \psi^{-1}$ is an isometry
with respect to the Hofer norm.
In  \cite{lalonde-polterovich}
F. Lalonde and L. Polterovich   study the isometries
of the group of Hamiltonian diffeomorphisms with respect to the Hofer norm.
Based on this they call a symplectic diffeomorphism $\psi$
bounded if the Hofer norm of the commutator $[\psi, h]$ remains bounded as $h$ varies
in $\ham(M,\omega)$.  The set of bounded symplectic diffeomorphisms $\textup{BI}_0(M)$
of $(M,\omega)$ is a group that contains all Hamiltonian diffeomorphisms.

F. Lalonde and L. Polterovich conjectured that $\ham(M,\omega)
=\textup{BI}_0(M,\omega)$
for any closed symplectic manifold $(M,\omega)$. This problem is known as the bounded isometry conjecture.
In \cite{lalonde-polterovich} they proved  the conjecture 
 when the symplectic manifold is a surface of positive genus or is a product
of  these surfaces. In  \cite{lalonde-pe}  F. Lalonde and C. Pestieau  
proved the conjecture for the product of a closed surface of positive genus and a simply connected manifold.
Recently, Z. Han \cite{han2} proved the conjecture
for the  Kodaira--Thurston manifold.


In fact, in \cite{lalonde-polterovich} F. Lalonde and L. Polterovich proved a stronger result than
the bounded isometry conjecture.
They proved  that if  an equivalence class of $\symp_0(M,\omega)/\ham(M,\omega)$
has an unbounded symplectic diffeomorphism, then there is a strongly unbounded symplectic
diffeomorphism in the same class. 
This is equivalent to the fact that for
any nonzero element $v$ of $H^1(M)/\Gamma_M$ there is a strongly unbounded symplectic diffeomorphism with flux $v$. Here $\Gamma_M$ stands for the flux group of $(M,\omega)$. 
For the details, see Section \ref{s:Flux}. Here we  prove this stronger result.


We prove the bounded isometry conjecture for a closed symplectic manifold $(M,\omega)$ of dimension $2n$
satisfying the following two conditions:
\begin{itemize}
\item[(a)] There are open sets $U_1,\ldots, U_l\subset M$ such that each $U_k$ is symplectomorphic to
$\mathbb{T}^{2n}\setminus B(\epsilon_k)$ with the standard symplectic form. Here
$\mathbb{T}^{2n}$ is the $2n$--dimensional torus and
$B(\epsilon_k)$ is the embedded image of the standard closed ball in $\mathbb{R}^{2n}$ for a  sufficiently small $\epsilon_k\geq 0$.

\item[(b)]  Let $j_k: U_k\to M$ be the inclusion map and
$j_{k*}:  H_c^1(U_k) \to H^1(M)$ the induced map in cohomology. Then
$$
H^1(M)=\sum_{k=1}^l j_{k*}( H_c^1(U_k)).
$$
\end{itemize}
A symplectic manifold 
satifying the conditions above is said to satisfy {\em (H)}.
Unless otherwise stated, throughout this article 
cohomology $H^*(\cdot)$ stands for
de Rham cohomology and  $H_c^*(\cdot)$ stands for de Rham cohomology
with compact support.

\begin{thm}
\label{t:BIO}
Let $(M,\omega)$ be a closed symplectic manifold that satisfies (H).
Then $$
\textup{BI}_0(M,\omega)=\textup{Ham}(M,\omega).
$$
\end{thm}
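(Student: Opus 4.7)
The plan is to prove the stronger form of the theorem announced in the introduction: for every nonzero class $v\in H^1(M)/\Gamma_M$ there exists a strongly unbounded symplectic diffeomorphism of $(M,\omega)$ with flux $v$. Combined with the flux identification $\symp_0(M,\omega)/\ham(M,\omega)\cong H^1(M)/\Gamma_M$, this yields $\textup{BI}_0(M,\omega)\subseteq\ham(M,\omega)$; the reverse inclusion is standard, since bi-invariance of the Hofer norm gives $\|[\varphi,h]\|\leq 2\|h\|$ whenever $h\in\ham(M,\omega)$.

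Using hypothesis (b), I would first write a chosen representative $v\in H^1(M)$ as a finite sum $v=\sum_{k=1}^{l}j_{k*}(v_k)$ with $v_k\in H_c^1(U_k)$. This reduces the question to a local construction, chart by chart: if for each $k$ one produces a symplectic diffeomorphism $\psi_k\in\symp_0(M,\omega)$ supported in $U_k$ whose flux is $j_{k*}(v_k)$, then $\psi=\psi_l\circ\cdots\circ\psi_1$ has flux $v$.

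To build each $\psi_k$, I would exploit the explicit model $U_k\cong\mathbb{T}^{2n}\setminus B(\epsilon_k)$ supplied by (a). On the standard torus, translations realize every class in $H^1(\mathbb{T}^{2n})$ as a flux, and a Hamiltonian cutoff near the removed ball produces a compactly supported symplectic isotopy on $U_k$ whose time-one map has compactly supported flux $v_k$. Extending by the identity gives $\psi_k$.

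The technical heart of the argument, and the step I expect to be the main obstacle, is establishing strong unboundedness. The model case is the torus: Lalonde and Polterovich exhibit Hamiltonian diffeomorphisms $h_m$ with $\|[\psi_k,h_m]\|_{\textup{Hofer}}\to\infty$ via energy-capacity estimates for large subsets displaced by translations. To transfer this divergence to $M$ one must choose the $h_m$ compactly supported in $U_k$ and then argue that their commutators with $\psi_k$ retain large Hofer norm in the bigger ambient manifold, where displacement energies can only decrease. I expect this to be handled by combining the energy-capacity inequality with a careful choice of displaced sets that are already ``tight'' inside the torus-minus-ball, so that no room is gained by enlarging the ambient space; condition (H) is used precisely to ensure that such tight sets exist simultaneously with the prescribed flux. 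Once a single $\psi_k$ with nonzero $v_k$ is shown strongly unbounded, the composition from Step 1 together with a flux-additivity argument upgrades the conclusion to arbitrary nonzero $v$, and the theorem follows through the identification with the flux exact sequence of Section \ref{s:Flux}.
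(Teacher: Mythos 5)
Your overall strategy coincides with the paper's: reduce via hypothesis (H)(b) to compactly supported symplectomorphisms in the torus charts, realize the flux there by cut-off translations, and get unboundedness from the Lalonde--Polterovich displacement estimates (this is Propositions \ref{p:strongunonT} and \ref{p:stromgunbM}). However, the step you yourself identify as the technical heart is where your argument breaks down. You propose to choose the displaced sets ``tight'' inside $U_k\cong\mathbb{T}^{2n}\setminus B(\epsilon_k)$ and to run the energy--capacity inequality in $M$ itself, so that ``no room is gained by enlarging the ambient space.'' This cannot produce divergence: every subset of the closed manifold $M$ (in particular of $U_k$) has Gromov capacity bounded above by a constant depending only on the finite volume of $M$, so displacement inside $M$ yields at best a bounded lower bound for $\|[\psi_k,h_m]\|$, never $\|[\psi_k,h_m]\|\to\infty$. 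The sets of unbounded area in the Lalonde--Polterovich computation are the long thin rectangles $B_t$, and these exist only after unrolling, i.e.\ in a cover, not in the compact manifold. This is precisely why the paper defines strong unboundedness through the lift of $[\psi,f]$ to the universal cover $\tilde M$ and invokes Proposition \ref{p:Hoferboundbelow}, which bounds the Hofer norm downstairs by half the capacity of a ball disjoined by the lift upstairs.

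The concrete ingredient missing from your proposal is Lemma \ref{l:embeddingstrip}: since $H^1(M)\neq 0$, a strip $((0,\epsilon)\times\mathbb{R},\,dx\wedge dy)$ embeds symplectically into $\tilde M$, so $\tilde M$ contains sets of arbitrarily large capacity, and the regions $B_t\times\mathbb{R}^{2n-2}$ displaced by the lifted commutator have capacity tending to infinity there. Your claim that condition (H) ``is used precisely to ensure that such tight sets exist simultaneously with the prescribed flux'' misidentifies its role: (H) enters only through the flux decomposition and the torus model, while the room needed for unbounded capacities comes from the universal cover. Two smaller points you should also make explicit: passing from ``each nonzero flux class contains an unbounded element'' to $\textup{BI}_0(M,\omega)\subseteq\ham(M,\omega)$ uses that boundedness is an invariant of the coset modulo $\ham(M,\omega)$ (via $[\psi\alpha,f]=[\psi,\alpha f\alpha^{-1}]\,[\alpha,f]$ for $\alpha$ Hamiltonian); and for $l>1$ the unboundedness of the composition $\psi_l\circ\cdots\circ\psi_1$ does not follow from ``flux additivity'' alone, but from the observation in the Remark after Proposition \ref{p:strongunonT} that the displacing Hamiltonian isotopy $f_t$ can be chosen independently of the flux data.
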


The proof of Theorem \ref{t:BIO} is based on the fact that the bounded isometry conjecture
holds for the punctured torus $(\mathbb{T}^{2n}_*,\omega_0)$. This was shown
in \cite{lalonde-polterovich} for $n=1$, but in fact
their argument  works  for all $n$. For the sake of completeness we
prove that $\textup{BI}_0(\mathbb{T}^{2n}_*,\omega_0)=\ham^c(\mathbb{T}^{2n}_*,\omega_0)$
in Proposition \ref{p:strongunonT}. Here $\ham^c(M,\omega)$ stands for the group of
Hamiltonian diffeomorphisms of $(M,\omega)$ with compact support.

The first example  of a closed symplectic manifold satisfying {\em (H)} is 
a closed surface $(\Sigma_g,\omega)$   with $g$ embedded punctured tori. 
Another example is the blow-up of the torus $(\mathbb{T}^{2n},\omega_0)$ at one point, 
or more
 generally the blow-up of $(\mathbb{T}^{2n},\omega_0)$ along a 
simply connected symplectic submanifold.
In Section 2, we give some more examples of symplectic manifolds that satisfy {\em (H)}.



The bounded isometry conjecture holds
for a wider class of symplectic manifolds that just those that satisfy {\em (H)}.

\begin{cor}
\label{c:prodbounded}
Let $(M,\omega)$  be a closed symplectic manifold that satisfies (H), 
and $(N,\eta)$ a closed symplectic manifold such that $H^1(N)$ is trivial or
satisfies (H). Then
$$
\textup{BI}_0(M\times N,\omega\oplus\eta)=\ham(M\times N,\omega\oplus\eta).
$$
\end{cor}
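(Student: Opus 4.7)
The plan is to upgrade to the strengthened form of the bounded isometry conjecture discussed in Section \ref{s:Flux}: I will produce, for every nonzero class $v\in H^1(M\times N)/\Gamma_{M\times N}$, a strongly unbounded symplectic diffeomorphism of $(M\times N,\omega\oplus\eta)$ with flux $v$. By the equivalence explained before Theorem \ref{t:BIO}, this implies $\textup{BI}_0(M\times N,\omega\oplus\eta)=\ham(M\times N,\omega\oplus\eta)$.

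The first step is algebraic. The K\"unneth formula gives $H^1(M\times N)=H^1(M)\oplus H^1(N)$, and I would establish the splitting $\Gamma_{M\times N}=\Gamma_M\oplus \Gamma_N$ (with $\Gamma_N=0$ when $H^1(N)$ is trivial). The inclusion $\Gamma_M\oplus\Gamma_N\subseteq\Gamma_{M\times N}$ is immediate from products of loops $\phi^M_t\times\phi^N_t$. For the reverse inclusion I would detect the flux of a loop in $\symp_0(M\times N)$ component-wise, integrating against the compactly supported $1$--forms coming from condition (b) of hypothesis (H) on $U_k\subset M$ (and on $V_j\subset N$ when $N$ satisfies (H)): these span $H^1(M)\oplus H^1(N)$ by assumption, and being supported in one factor they localize the flux of a loop in the product to the corresponding factor flux group. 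This also establishes the byproduct advertised in the abstract.

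Given a nonzero $v=v_M+v_N$ modulo $\Gamma_M\oplus\Gamma_N$, at least one summand is nonzero. Applying the strengthened Theorem \ref{t:BIO} to $M$ (and to $N$ if $N$ satisfies (H)) yields strongly unbounded $\psi_M\in\symp_0(M)$ with flux $v_M$, and similarly $\psi_N\in\symp_0(N)$ with flux $v_N$; when $v_N=0$ (automatic if $H^1(N)=0$) take $\psi_N=\textup{id}_N$. The product $\psi=\psi_M\times\psi_N\in\symp_0(M\times N)$ then has flux $v$.

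The main obstacle is the third step: showing that $\psi$ is strongly unbounded on $M\times N$. The strong unboundedness of $\psi_M$ produced by Proposition \ref{p:strongunonT} and the proof of Theorem \ref{t:BIO} is witnessed by Hamiltonians $h_j\in\ham^c(U_k)$ with $\|[\psi_M,h_j]\|_M\to\infty$. I would test $\psi$ against $h_j\times\textup{id}_N$, observing that $[\psi,h_j\times\textup{id}_N]=[\psi_M,h_j]\times\textup{id}_N$ is supported in $U_k\times N$. The key quantitative input needed is a stabilization lower bound of the form $\|f\times\textup{id}_N\|_{M\times N}\geq c\,\|f\|_M$ for $f\in\ham^c(U_k)$, with $c>0$ uniform in $f$. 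This is where the punctured-torus structure of $U_k\cong \mathbb{T}^{2n}_*$ is essential: the invariant used to bound the Hofer norm from below in Proposition \ref{p:strongunonT} is computed from integrating over the torus factor and is insensitive to stabilization by an arbitrary symplectic manifold. Reinspecting the proof of Proposition \ref{p:strongunonT} in the stabilized setting $U_k\times N$, and verifying that the bound survives, is the hardest part of the argument.
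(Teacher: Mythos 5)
There are two genuine gaps here, and they sit exactly at the places where the paper has to do real work.

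First, your ``algebraic first step'' asserts the splitting $\Gamma_{M\times N}=\Gamma_M\oplus\Gamma_N$ up front, by pairing the flux of a loop in $\symp_0(M\times N,\omega\oplus\eta)$ against compactly supported $1$--forms from condition (b). Pairing only tells you the K\"unneth components of the flux in $H^1(M)\oplus H^1(N)$; it gives no reason whatsoever that those components lie in $\Gamma_M$ and $\Gamma_N$, since a loop in the product need not be homotopic to a product of loops. This splitting is precisely Theorem \ref{t:prodflux} of the paper, and there it is a \emph{consequence} of the corollary you are trying to prove (via Theorem \ref{t:prod} and Lemma \ref{l:prodHam}), not an input; as stated your argument for it is circular in spirit and unproven in substance. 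Fortunately you do not need it: the easy inclusion $\Gamma_M\oplus\Gamma_N\subseteq\Gamma_{M\times N}$ already shows that a class $v$ that is nonzero modulo $\Gamma_{M\times N}$ has at least one K\"unneth component nonzero modulo the corresponding factor flux group, which is all your construction of $\psi_M\times\psi_N$ requires.

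Second, and more seriously, the step you yourself flag as ``the hardest part'' is exactly the step that cannot be obtained by reinspecting Proposition \ref{p:strongunonT}. The lower bound there comes from the energy--capacity inequality with Gromov's capacity, and Gromov's capacity does \emph{not} stabilize: $c_G\bigl(B\times N\bigr)$ can stay bounded while the area of $B$ grows, so the displacement of $B_t\times\mathbb{R}^{2n-2}\times N$ gives nothing by that route. What is needed is the stable energy--capacity inequality of Lalonde and Pestieau, $e(B^{2n}(c_0)\times N)\geq \tfrac12\,c(B^{2n}(c_0))$, which is exactly how the paper proceeds (Lemma \ref{l:fxgunbounded}): the lift of $[\psi_M,h]\times 1_N$ displaces $B^{2n}(c_0)\times N$, hence $\|[\psi_M\times\phi,\,h\times 1_N]\|\geq c/2$, so $\psi_M\times\phi$ is \emph{unbounded}. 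Note that this yields unboundedness, not strong unboundedness, of the product diffeomorphism; your plan promises strongly unbounded elements on $M\times N$, which does not follow from this argument and is not needed --- unboundedness of every non-Hamiltonian element of $\symp_0(M\times N,\omega\oplus\eta)$ already gives $\textup{BI}_0=\ham$. Finally, note the paper's own proof runs in the opposite direction: it takes an arbitrary bounded $\psi\in\symp_0(M\times N,\omega\oplus\eta)$, writes $\psi=\theta\circ(\psi_1\times\psi_2)$ with $\theta$ Hamiltonian using the flux exact sequence, and applies Theorem \ref{t:prod} to each factor (with $\symp_0(N,\eta)=\ham(N,\eta)$ when $H^1(N)=0$); the underlying mechanism, a strongly unbounded factor plus the stabilized inequality, is the one your sketch is missing.
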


As a consequence of our argument in the proof of the bounded isometry conjecture for this
particular class of manifolds, we obtain
an interesting result about the flux group. We  show  that the flux group
of a product of two closed symplectic manifolds is isomorphic to
the direct sum  of the flux group of each manifold. That is, if
$(M,\omega)$ and $(N,\eta)$ are symplectic manifolds as in Corollary \ref{c:prodbounded}
with flux groups $\Gamma_M$ and $\Gamma_N$, then 
 $\Gamma_{M\times N}\simeq\Gamma_M\oplus\Gamma_N$,
where $\Gamma_{M\times N}$ is the flux group of  $(M\times N, \omega\oplus\eta)$.
When this relation holds, we  say that the flux group of
$(M\times N, \omega\oplus\eta)$ splits.

For instance this is well-known  when we consider copies
of $(\mathbb{T}^2,\omega_0)$. In this case, $\Gamma_{\mathbb{T}^2}$
is equal to $H^1(\mathbb{T}^2,\mathbb{Z})$ and direct calculation
 shows that $\Gamma_{\mathbb{T}^{2n}}=\Gamma_{\mathbb{T}^2}
\oplus\cdots \oplus\Gamma_{\mathbb{T}^2}$ (see 
\cite[Ch.\ 10]{mc-sal},  and (\cite[Ch.\ 14]{pol}).
Recall  that  in \cite{kedra} J. K\c{e}dra
gave conditions under which the flux group vanishes and these conditions
are compatible with products. For instance, if $(M,\omega)$
is aspherical with nonzero Euler characteristic, then by Theorem B of \cite{kedra},
 $\Gamma_M\oplus\Gamma_M=\Gamma_{M\times M}=0.$

When $(M,\omega)$ is a closed surface of genus greater  than one,
 the flux group is trivial; when $g=1$, the flux group equals $\mathbb{Z}^2$ .
In \cite[Remark 4.3.E]{lalonde-polterovich}, F. Lalonde and L. Polterovich
showed that the flux group splits  when the manifold is a product of
closed surfaces of positive genus. They achieved this in their study of
bounded symplectic diffeomorphisms.  Here we follow closely their ideas.

\begin{thm}
\label{t:prodflux}
Let $(M,\omega)$ and $(N,\eta)$  be closed symplectic manifolds as in
Corollary \ref{e:exacsecfluxfundamental}.
Then the flux group of $(M\times N,\omega\oplus\eta)$ splits:
$\Gamma_{M\times N}\simeq\Gamma_{M}\oplus\Gamma_{N}.$
\end{thm}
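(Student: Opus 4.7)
The strategy is to translate the flux-splitting statement into an assertion about Hamiltonicity of product symplectomorphisms, which can then be attacked via the bounded isometry conjecture established above (Theorem~\ref{t:BIO} and Corollary~\ref{c:prodbounded}). Throughout I use the K\"unneth identification $H^1(M\times N;\mathbb{R})\cong H^1(M;\mathbb{R})\oplus H^1(N;\mathbb{R})$.

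The easy inclusion $\Gamma_M\oplus\Gamma_N\subseteq\Gamma_{M\times N}$ comes from products of loops: if $\{\phi_t\}$ is a loop in $\symp_0(M)$ based at the identity, generated by symplectic vector fields $X_t$ with flux $v\in\Gamma_M$, then $\{\phi_t\times\text{id}_N\}$ is a loop in $\symp_0(M\times N)$ generated by the vector fields $(X_t,0)$, and the closed one-form $\iota_{(X_t,0)}(\omega\oplus\eta)=\pi_M^*\iota_{X_t}\omega$ shows that its flux is $(v,0)$ under K\"unneth; symmetrically for elements of $\Gamma_N$.

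For the reverse inclusion, take $(v,w)\in\Gamma_{M\times N}$ and choose representatives $\phi\in\symp_0(M)$, $\psi\in\symp_0(N)$ with $\fl(\phi)\equiv v\pmod{\Gamma_M}$ and $\fl(\psi)\equiv w\pmod{\Gamma_N}$. The product $\phi\times\psi\in\symp_0(M\times N)$ has flux class $(v,w)$ modulo $\Gamma_M\oplus\Gamma_N$, and since $(v,w)\in\Gamma_{M\times N}$ this class vanishes in $H^1(M\times N)/\Gamma_{M\times N}$, so $\phi\times\psi\in\ham(M\times N)$. By Corollary~\ref{c:prodbounded}, $\phi\times\psi$ is then bounded in the Hofer sense. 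If I can further show that $\phi$ itself is bounded in $\symp_0(M)$, then Theorem~\ref{t:BIO} applied to $M$ forces $\phi\in\ham(M)$, whence $v\in\Gamma_M$; a symmetric argument yields $w\in\Gamma_N$.

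The remaining task is to pass boundedness from $\phi\times\psi$ back to $\phi$. For every $h\in\ham(M)$ the identity $[\phi\times\psi,\,h\times\text{id}_N]=[\phi,h]\times\text{id}_N$ holds, so boundedness of $\phi\times\psi$ yields a uniform bound on $\|[\phi,h]\times\text{id}_N\|_{M\times N}$ as $h$ ranges over $\ham(M)$. Converting this into a uniform bound on $\|[\phi,h]\|_M$ is the main obstacle: equivalently, one needs the trivial extension $F\mapsto F\times\text{id}_N$ not to collapse the Hofer norm. Rather than prove such a comparison in full generality, I would follow the Lalonde--Polterovich blueprint and invoke the strongly unbounded refinement of Theorem~\ref{t:BIO} (Proposition~\ref{p:strongunonT}): if $\phi\notin\ham(M)$, the diverging commutators can be realized by Hamiltonians $h_n$ supported inside one of the charts $U_k$ provided by condition~(H), and there the Hofer norms of $[\phi,h_n]$ and of their trivial extensions to $M\times N$ can be compared via displacement-energy estimates in the model $\mathbb{T}^{2n}\setminus B(\epsilon_k)$. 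Carrying out this final comparison, in parallel with the argument that proves $\textup{BI}_0(\mathbb{T}^{2n}_*,\omega_0)=\ham^c(\mathbb{T}^{2n}_*,\omega_0)$, is the technical crux.
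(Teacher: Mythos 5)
Your overall route coincides with the paper's: reduce the splitting of $\Gamma_{M\times N}$ to the statement that if $\phi\times\psi$ is Hamiltonian then each factor is Hamiltonian (this is exactly Lemma~\ref{l:prodHam}/Theorem~\ref{t:prodHam}, which you re-derive), and then attack that statement through boundedness and hypothesis \emph{(H)}. But the proof is not complete: the step you yourself label ``the technical crux'' --- passing from the uniform bound on $\|[\phi,h]\times 1_N\|_{M\times N}$ to a conclusion about $\phi$ on $M$ --- is precisely the content of the theorem, and your sketch of how to close it would not work as stated. Comparing Hofer norms of $[\phi,h_n]$ with their trivial extensions ``inside one of the charts $U_k$'' runs into two problems: first, localizing only gives inequalities in the wrong direction (as the paper notes, $\|\cdot\|_{U}\geq\|\cdot\|_{M}$, and likewise for $U_k\times N$ versus $M\times N$), and second, Gromov capacities on a closed manifold are bounded above, so no estimate carried out in the compact model $\mathbb{T}^{2n}\setminus B(\epsilon_k)$ (or its product with $N$) can produce the divergence you need.

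The mechanism that actually closes the gap is different. Since $\phi\notin\ham(M,\omega)$ has nonzero flux $v$, Proposition~\ref{p:stromgunbM} (not merely unboundedness of $\phi$) supplies a \emph{strongly} unbounded $\psi_0\in\symp_0(M,\omega)$ with the same flux, so $\phi=\psi_0\circ\alpha$ with $\alpha$ Hamiltonian; strong unboundedness means the lifts of $[\psi_0,h]$ disjoin balls of arbitrarily large capacity in the universal cover $\tilde M$. Then one uses the identity $[\psi_0\times\psi,\,h\times 1_N]=[\psi_0,h]\times 1_N$ and the \emph{stable} energy--capacity inequality of Lalonde--Pestieau applied in $\tilde M\times N$,
$$
\tfrac{1}{2}\,c\bigl(B^{2n}\bigr)\;\leq\; e\bigl(B^{2n}\times N\bigr)\;\leq\;\bigl\|[\psi_0,h]\times 1_N\bigr\|,
$$
to conclude that $\psi_0\times\psi$, hence $\phi\times\psi$, is unbounded (this is Lemma~\ref{l:fxgunbounded}, feeding into Theorem~\ref{t:prod}). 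That contradicts the boundedness of the Hamiltonian diffeomorphism $\phi\times\psi$. In short, the missing ingredient is not a Hofer-norm comparison between $M$ and $M\times N$ (which is not available in general), but the combination of Proposition~\ref{p:stromgunbM} with the stable energy--capacity inequality on the universal cover of the product; without it your argument stops exactly where the theorem begins.
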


In Section \ref{s:ejemplos} we give an application of Theorem \ref{t:prodflux}
to the fundamental group of $\ham(S^2\times\Sigma_g)$.
Finally we point out an equivalent statement to that of Theorem \ref{t:prodflux}.

\begin{thm}
\label{t:prodHam}
Let $(M,\omega)$ and $(N,\eta)$  be closed symplectic manifolds as in
Corollary \ref{e:exacsecfluxfundamental}. 
 If $\psi\in \symp_0(M,\omega)$ and $\phi\in \symp_0(N,\eta)$ are  such that
$\psi \times \phi$ is a Hamiltonian diffeomorphism of $(M\times N,\omega\oplus\eta)$, then
$\psi$ and $\phi$ are  Hamiltonian diffeomorphisms.
\end{thm}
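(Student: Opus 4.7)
The plan is to deduce Theorem \ref{t:prodHam} directly from Theorem \ref{t:prodflux} by computing the flux of $\psi\times\phi$ in terms of the Künneth splitting of $H^1(M\times N)$. Recall that a symplectic diffeomorphism in $\symp_0$ is Hamiltonian if and only if its flux class in $H^1/\Gamma$ vanishes, so the statement is equivalent to an assertion about the flux homomorphism.

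First I would set up notation for the flux. Let $\psi_t$ and $\phi_t$ be symplectic isotopies from the identity to $\psi$ and $\phi$, respectively. Then $\Psi_t := \psi_t\times\phi_t$ is a symplectic isotopy of $(M\times N,\omega\oplus\eta)$ from the identity to $\psi\times\phi$, so its flux can be computed explicitly. Writing $X_t^\psi$, $X_t^\phi$ for the time-dependent symplectic vector fields generating $\psi_t$ and $\phi_t$, the generator of $\Psi_t$ at a point $(x,y)$ is $(X_t^\psi(x),X_t^\phi(y))$, and the contraction with $\omega\oplus\eta$ is $\pi_M^*(\iota_{X_t^\psi}\omega)+\pi_N^*(\iota_{X_t^\phi}\eta)$. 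Integrating in $t$ and passing to cohomology, one gets the clean formula
\begin{equation*}
\fl(\psi\times\phi)=\pi_M^*\,\fl(\psi)+\pi_N^*\,\fl(\phi)
\end{equation*}
in $H^1(M\times N;\mathbb{R})$, where I identify $H^1(M\times N)\cong\pi_M^*H^1(M)\oplus\pi_N^*H^1(N)$ via the Künneth decomposition. (Both sides depend on the isotopies, but the ambiguity is exactly by the respective flux groups, so the identity descends to $H^1/\Gamma$.)

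Next I would invoke Theorem \ref{t:prodflux}: under the Künneth splitting, the flux group of the product splits as $\Gamma_{M\times N}\simeq\pi_M^*\Gamma_M\oplus\pi_N^*\Gamma_N$. By hypothesis $\psi\times\phi$ is Hamiltonian, so $\fl(\psi\times\phi)\in\Gamma_{M\times N}$. Using the displayed formula together with the splitting of $\Gamma_{M\times N}$, the two components $\pi_M^*\fl(\psi)$ and $\pi_N^*\fl(\phi)$ must individually lie in $\pi_M^*\Gamma_M$ and $\pi_N^*\Gamma_N$ respectively. Since $\pi_M^*$ and $\pi_N^*$ are injective on $H^1$ (again by Künneth), this forces $\fl(\psi)\in\Gamma_M$ and $\fl(\phi)\in\Gamma_N$, so both $\psi$ and $\phi$ are Hamiltonian.

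The only real content is Theorem \ref{t:prodflux}; granting it, the proof is a formal consequence of the naturality of flux under products. The main point to handle carefully is therefore the compatibility of the isomorphism $\Gamma_{M\times N}\simeq\Gamma_M\oplus\Gamma_N$ supplied by Theorem \ref{t:prodflux} with the Künneth decomposition on $H^1$, so that the componentwise conclusion above is legitimate. Conversely, this argument shows that Theorem \ref{t:prodHam} and Theorem \ref{t:prodflux} are equivalent statements, as the paper asserts.
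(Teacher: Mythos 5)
Your argument is correct and is essentially the route the paper itself takes: Theorem \ref{t:prodHam} is obtained from Theorem \ref{t:prodflux} through the equivalence of Lemma \ref{l:prodHam}, whose commutative diagram encodes exactly your product formula $\fl(\psi\times\phi)=\pi_M^*\,\fl(\psi)+\pi_N^*\,\fl(\phi)$. The compatibility you flag at the end is automatic, since the paper's notion of splitting is by definition the injectivity of $i_0: H^1(M\times N)/\Gamma_M\oplus\Gamma_N \to H^1(M\times N)/\Gamma_{M\times N}$, i.e.\ the equality $\Gamma_{M\times N}=\pi_M^*\Gamma_M\oplus\pi_N^*\Gamma_N$ inside $H^1(M\times N)$, which is precisely the K\"unneth--compatible form your componentwise conclusion requires.
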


Finally we make the remark that all  the results remain true in the noncompact case,
as long as one considers diffeomorphisms with compact support.
\medskip

The authors thank Pro. K. Ono for helpful comments on the first draft of this note, and to 
Prof. D  Ruberman and Prof. L. Tu for their valuable comments on improving the exposition 
of this note. The second author  wishes to thank ICTP, Trieste for its hospitality during 
part of the work on this paper.



\section{Examples}
\label{s:ejemplos}

\begin{exa}
 Consider the torus $(\mathbb{T}^{2n},\omega_0)$ with its standard symplectic form. Let 
 $(\tilde{\mathbb{T}}^{2n},\tilde{\omega}_0)$ be its blow-up 
at one point. See \cite{mcduff-Examples}. There is a small $\epsilon>0$ such that
the inclusion  $\mathbb{T}^{2n} \setminus B(\epsilon) 
\to \tilde{\mathbb{T}}^{2n}$ is a symplectic embedding. Moreover, the 
induced map
$H^1_c(\mathbb{T}^{2n} \setminus B(\epsilon) ) \to 
H^1(\tilde{\mathbb{T}}^{2n})$ is an isomorphism. Hence, $(\tilde{\mathbb{T}}^{2n},\tilde{\omega}_0)$
satisfies {\em (H)} and  $\textup{BI}_0(\tilde{\mathbb{T}}^{2n},\tilde{\omega}_0)
=\textup{Ham}(\tilde{\mathbb{T}}^{2n},\tilde{\omega}_0)$.

More generally, let $N$ be a  simply connected symplectic submanifold of $(\mathbb{T}^{2n},\omega_0)$.
Denote by $(\tilde{\mathbb{T}}^{2n}_N,\tilde{\omega}_0)$ the blow up of $(\mathbb{T}^{2n},\omega_0)$
along $N$. Since $N$ is simply connected, the blow up map $\tilde{\mathbb{T}}^{2n}_N\to 
{\mathbb{T}}^{2n}$ 
induces an isomorphism $H^1(  {\mathbb{T}}^{2n})\to H^1(\tilde{\mathbb{T}}^{2n}_N)$.
Therefore, $\textup{BI}_0(\tilde{\mathbb{T}}^{2n}_N,\tilde{\omega}_0)
=\textup{Ham}(\tilde{\mathbb{T}}^{2n}_N,\tilde{\omega}_0)$ by Theomre \ref{t:BIO}.
\end{exa}

\medskip
Thus, in every dimension we have new examples of symplectic manifolds that
satisfy the bounded isometry conjecture. The next example explores some consequences
of Theorem \ref{t:prodflux}.

\begin{exa}
Consider the symplectic embedding $(\mathbb{T}^{2n},\omega_0)\to (\mathbb{T}^{2(m+n)}, \omega) $ 
in the last $2n$ coordinates. The symplectic  form on the torus is the canonical symplectic form.
Thus $\mathbb{T}^{2(m+n)}\setminus\mathbb{T}^{2n}=\mathbb{T}^{2m}_*\times\mathbb{T}^{2n}$.
It follows by  Corollary \ref{c:prodbounded} that 
$\textup{BI}_0( \mathbb{T}^{2(m+n)}\setminus\mathbb{T}^{2n},{\omega})
=\textup{Ham}^c( \mathbb{T}^{2(m+n)}\setminus\mathbb{T}^{2n} ,\omega)$.

It is also possible to show directly that 
$(\mathbb{T}^{2(m+n)}\setminus\mathbb{T}^{2n},{\omega})$ satisfies the bounded isometry conjecture.
Our  arguments in the proof of Proposition \ref{p:strongunonT}
apply to  this case with no major changes.
In fact, condition {\em (H)} can be weakened by allowing  the set $U$ to
be symplectomorphic to $\mathbb{T}^{2(m+n)}\setminus \mathbb{T}^{2n}$,
and not only symplectomorphic to a punctured torus. 
\end{exa}


\medskip
\begin{exa}
Let  $(S^2,\omega)$ be the $2$--sphere and $(\Sigma_g,\eta)$ a Riemman surface
of genus $g\geq 1$, each with a symplectic form of total area $1$.
Recall that
$\pi_1 (\ham (S^2,\omega))\simeq \mathbb{Z}_2$ and  $\ham (\Sigma_g,\eta)$
is simply connected for $g\geq 1$. By \cite{Remi} and  \cite{pedroza}
we can  say that $\pi_1(\ham(S^2\times \Sigma_g,\omega\oplus\eta))$
has an element of order two. We can say more by using Theorem \ref{t:prodflux}.

Since the flux group,
$\Gamma_{S^2}$ is trivial, by Theorem \ref{t:prodflux},  it follows 
 $\Gamma_{S^2\times\Sigma_g}= \Gamma_{\Sigma_g}$. Thus,
$\Gamma_{S^2\times\Sigma_g}$ equals $\mathbb{Z}\oplus\mathbb{Z}$
for $g=1$ and is trivial for $g>1$. It follows from
the exact sequence (\ref{e:exacsecfluxfundamental}) below
that for $g\geq 1$ the inclusion map induces an isomorphism
$$
\pi_1(\ham(S^2\times \Sigma_g,\omega\oplus\eta)) \to
\pi_1(\symp_0(S^2\times \Sigma_g,\omega\oplus\eta)).
$$
For $g=1$ we get  the exact sequence
\begin{eqnarray}
\label{e:exacseqS}
0\to \pi_1(\ham(S^2\times \mathbb{T}^2,\omega\oplus\eta)) \to
\pi_1(\symp_0(S^2\times \mathbb{T}^2,\omega\oplus\eta)) \to
\mathbb{Z}\oplus\mathbb{Z} \to 0.
\end{eqnarray} 
Let $\mathcal{D}^g$ denote the group of volume-preserving diffeomorphisms of $S^2\times \Sigma_g$  that also
preserve the fibers of $S^2\times \Sigma_g\to S^2$. According to D. McDuff  \cite[Prop. 1.6]{mcduff:almost},
for  $g=1$, the
 map $\pi_1(\symp_0(S^2\times \mathbb{T}^2,\omega\oplus\eta))\to \pi_1(\mathcal{D}^1)$ is
an isomorphism and $g>1$, the map $\pi_1(\symp_0(S^2\times \Sigma_g,\omega\oplus\eta))
\to \pi_1(\mathcal{D}^g)$ is surjective. Moreover by \cite[ Cor. 5.4]{mcduff:almost},
 $\pi_1(\mathcal{D}^g)\otimes \mathbb{Q}$ has dimension three when $g=1$ and dimension
one when $g>1$. Hence, from the exact sequence (\ref{e:exacseqS}) we get the exact sequence
\begin{eqnarray*}
\pi_1(\ham(S^2\times \mathbb{T}^2,\omega\oplus\eta))\otimes\mathbb{Q} \to
\pi_1(\symp_0(S^2\times \mathbb{T}^2,\omega\oplus\eta))\otimes\mathbb{Q} \to
(\mathbb{Z}\oplus\mathbb{Z})\otimes\mathbb{Q} \to 0
\end{eqnarray*} 
We conclude that the dimension of $\pi_1(\ham(S^2\times \Sigma_g,\omega\oplus\eta))\otimes\mathbb{Q}$
is at least one for $g\geq 1$.
\end{exa}

\section{The flux morphism}
\label{s:Flux}

First a word of warning: if $G$ is a group of diffeomorphisms, we will use $\psi$ to
denote an element in $\pi_1(G)$ and also
to denote a diffeomorphism. It will be clear
from the context what it represents.

Let $(M,\omega)$ be a closed symplectic manifold and $\psi= \{\psi_t\}_{0\leq t\leq 1 }$
a loop that represents an element of $\pi_1(\symp_0(M,\omega))$. The isotopy $\{\psi_t\}$ induces a
time-dependent vector field $X_t$ given by  the equation
$$
\frac{d}{dt}\psi_t=X_t\circ \psi_t.
$$
Then the flux morphism $\textup{Flux}_M: \pi_1(\symp_0(M,\omega))\to H_{dR}^1(M)$
is defined by
$$
\textup{Flux}_M(\psi)=\int_0^1 [\iota(X_t)\omega]dt.
$$
This map is well defined, that is, it depends only on the homotopy class in $\symp_0(M,\omega)$
based at the identity,  and  is a group morphism.  The image of $\textup{Flux}_M$ is
denoted by  $\Gamma_M$ and is called the {\em flux group} of $(M,\omega).$
The rank of $\Gamma_M$ is bounded by the first Betti number $b_1(M)$ and is a discrete subgroup of
$H^1(M)$ (see \cite{lmp} and \cite{ono}).
Moreover, the flux
morphism fits into the exact sequence of abelian groups
\begin{eqnarray}
\label{e:exacsecfluxfundamental}
0\to \pi_1(\ham(M,\omega)) \to \pi_1(\symp_0(M,\omega))
\to \Gamma_M \to 0
 \end{eqnarray}
where the first map is induced by inclusion and the next one is the flux morphism.

The flux morphism can also be defined on $\symp_0(M,\omega)$, rather than on its fundamental group.
In this case for a given symplectic diffeomorphism $\psi$ one considers a symplectic isotopy that joints
$1_M$ with $\psi$; this will induced a time-dependent vector field $X_t$ as before. As 
in the previous case we have the map  $\textup{Flux}_M:\symp_0(M,\omega) \to
H^1(M)/\Gamma_M$. There is also an exact sequence for this morphism,
\begin{eqnarray}
\label{e:exacsecflux}
0\to \ham(M,\omega) \to \symp_0(M,\omega)
\to H^1(M)/\Gamma_M \to 0,
 \end{eqnarray}
where the first map is inclusion and the last one is the flux morphism  just defined.
Note that  if $\psi$ and $\phi$ are symplectic diffeomorphisms
with the same flux, then by the exact sequence (\ref{e:exacsecflux}) there is a Hamiltonian 
diffeomorphism $\theta$ such that $\psi=\phi\circ\theta.$  This observation will be used later.

Finally,  the flux morphism can also be defined for noncompact symplectic manifolds.
In this case one considers symplectic diffeomorphisms with compact support, and the flux
morphism takes the form $\textup{Flux}_M: \pi_1(\symp_0^c(M,\omega))\to H^1_c(M)$,
and similarly for the flux defined on the group $\symp_0^c(M,\omega)$.
For more details of the flux morphism see  the books of D. McDuff and D. Salamon \cite{mc-sal} and of
L. Polterovich \cite{pol}.

\bigskip
Consider two closed symplectic manifolds $(M,\omega)$ and $(N,\eta)$. Then
$(M\times N,\omega\oplus\eta)$,  where $\omega\oplus\eta$
stands for $\pi_M^*(\omega)+\pi_N^*(\eta)$, is also a symplectic manifold. The map
$$
\Psi : \symp_0(M,\omega)\times \symp_0(N,\eta)\to \symp_0(M\times N,\omega\oplus\eta)
$$
given by $\Psi(\psi,\phi)=\psi\times\phi$ is a well-defined group homomorphism.
It also follows  that  $\Gamma_{M}\oplus\Gamma_{N}$ is a subgroup of
$\Gamma_{M\times N}$, 
 so  the induced map $i_0: H^1(M\times N)/\Gamma_{M}\oplus\Gamma_{N} \to H^1(M\times N)/\Gamma_{M\times N}$
is surjective.  Tp prove that $\Gamma_{M\times N}\simeq\Gamma_{M}\oplus\Gamma_{N}$, it suffices
to show that the map $i_0$ is injective. We can rephrase this in terms of Hamiltonian diffeomorphisms via the
exact sequence of the flux morphism.

\begin{lem}
\label{l:prodHam}
Theorems \ref{t:prodflux} and \ref{t:prodHam} are equivalent.
\end{lem}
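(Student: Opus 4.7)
The plan is to read both implications off a short diagram chase, after first recording how flux behaves on product diffeomorphisms.

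The starting point is the following compatibility. Let $\{\psi_t\}_{0\leq t\leq 1}$ be a symplectic isotopy in $\symp_0(M,\omega)$ joining $1_M$ to $\psi$ and generated by the time-dependent vector field $X_t$, and let $\{\phi_t\}$ be a similar isotopy on $(N,\eta)$ generated by $Y_t$. Then $\{\psi_t\times\phi_t\}$ joins $1_{M\times N}$ to $\psi\times\phi$ and is generated by $X_t\oplus Y_t$. A direct computation yields
$$
\iota(X_t\oplus Y_t)(\pi_M^*\omega+\pi_N^*\eta)=\pi_M^*\iota(X_t)\omega+\pi_N^*\iota(Y_t)\eta,
$$
so that under the K\"unneth identification $H^1(M\times N)\simeq \pi_M^*H^1(M)\oplus \pi_N^*H^1(N)$, the flux of $\psi\times\phi$ corresponds, modulo $\Gamma_{M\times N}$, to the pair $(\fl_M(\psi),\fl_N(\phi))$.

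Granted this, suppose Theorem \ref{t:prodflux} holds, so $\Gamma_{M\times N}=\Gamma_M\oplus\Gamma_N$. If $\psi\times\phi$ is Hamiltonian, then its flux is zero in $H^1(M\times N)/\Gamma_{M\times N}$, and therefore $(\fl_M(\psi),\fl_N(\phi))\in\Gamma_M\oplus\Gamma_N$. Consequently $\fl_M(\psi)\in\Gamma_M$ and $\fl_N(\phi)\in\Gamma_N$, and the exact sequence (\ref{e:exacsecflux}) applied separately to $M$ and $N$ shows $\psi\in\ham(M,\omega)$ and $\phi\in\ham(N,\eta)$. This is Theorem \ref{t:prodHam}.

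Conversely, assuming Theorem \ref{t:prodHam}, the inclusion $\Gamma_M\oplus\Gamma_N\subseteq\Gamma_{M\times N}$ is already observed in the paper, and only the reverse inclusion is new. Given $v\in\Gamma_{M\times N}$, use K\"unneth to write $v=(v_1,v_2)$ with $v_i\in H^1$ of the corresponding factor. Choose symplectic isotopies realizing $v_1$ and $v_2$ as fluxes, producing $\psi\in\symp_0(M,\omega)$ and $\phi\in\symp_0(N,\eta)$ with $\fl_M(\psi)=v_1$ and $\fl_N(\phi)=v_2$. Then $\psi\times\phi$ has flux $v\in\Gamma_{M\times N}$ and is therefore Hamiltonian, so by Theorem \ref{t:prodHam} each of $\psi$ and $\phi$ is Hamiltonian, i.e., $v_1\in\Gamma_M$ and $v_2\in\Gamma_N$. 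Hence $v\in\Gamma_M\oplus\Gamma_N$, proving Theorem \ref{t:prodflux}. The only potentially delicate point is keeping the two flavors of flux (a de Rham class at the level of isotopies and a class modulo $\Gamma$ at the level of diffeomorphisms) straight; the isotopy-level computation above resolves this and makes the rest routine.
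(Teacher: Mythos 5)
Your argument is correct and follows essentially the same route as the paper: it amounts to the commutativity of the diagram relating the flux exact sequences (\ref{e:exacsecflux}) for $(M,\omega)$, $(N,\eta)$ and $(M\times N,\omega\oplus\eta)$ via the product homomorphism $\Psi$ and the quotient map $i_0$, together with the observation that the splitting of $\Gamma_{M\times N}$ is equivalent to injectivity of $i_0$. The only difference is that you carry out the diagram chase and the isotopy-level flux computation explicitly, which the paper leaves implicit.
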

\begin{proof}
This follows by analyzing the exact sequence (\ref{e:exacsecflux}) of the flux morphism. We use the
exact sequence (\ref{e:exacsecflux}) for the manifolds $(M,\omega), (N,\eta)$ and
$(M\times N,\omega\oplus\eta)$ as in the next diagram where the rows are exact.
\begin{center}
\begin{tabular}{ccccccc}
 $\cdots$&$\to$& $\symp_0(M,\omega)\oplus \symp_0(N,\eta)
$&$\longrightarrow$& $H^1(M\times N)/\Gamma_{M}\oplus\Gamma_{N}$&$\longrightarrow$& $0$ \\
 & &$\downarrow$ &  & $\downarrow$ & & \\
 $\cdots$&$\to$&$ \symp_0(M\times N,\omega\oplus\eta)
$&$\longrightarrow$&$ H^1(M\times N)/\Gamma_{M\times N}$&$ \longrightarrow$&$ 0$ \\
\end{tabular}
\end{center}
 Here the horizontal maps are $\textup{Flux}_M\oplus \textup{Flux}_N$ and $\textup{Flux}_{M\times N}.$
And the vertical maps are $\Psi$ and $i_0$. So defined, the diagram commutes and the lemma follows.
\end{proof}

This lemma is the link between the theory of bounded symplectic diffeomorphisms and
 our question about the splitting of the flux group.

\section{Bounded symplectic diffeomorphisms}

Recall that the group $\ham(M,\omega)$ of Hamiltonian diffeomorphisms
is a normal subgroup of the group $\symp(M,\omega)$ of symplectic diffeomorphisms.
A symplectic diffeomorphism $\psi$ is called {\em bounded} if the
set
$$
\{ \| [\psi, f] \| : f\in \ham(M,\omega)   \}
$$
is bounded. Here $\|\cdot \|$ stands for the Hofer norm on
$\ham(M,\omega)$. A symplectic diffeomorphism is called {\em unbounded}
if is not bounded. The set of bounded symplectic diffeomorphisms
forms a subgroup of $\symp(M,\omega)$ and is denoted by $\textup{BI}(M,\omega)$.

Since for any $\psi\in \ham(M,\omega)$ and $f\in\ham(M,\omega)$, we have 
$\|[\psi,f]\|\leq 2 \| \psi\|$, every Hamiltonian diffeomorphism
is a bounded diffeomorphism. Thus $\ham(M,\omega)$ is a subgroup
of $\textup{BI}(M,\omega)$. Define
$$
\textup{BI}_0(M,\omega)=\textup{BI}(M,\omega) \cap   \symp_0(M,\omega).
$$

In this section we  generalize the work \cite{lalonde-polterovich} of F. Lalonde and
L. Polterovich, in which  a
fundamental observation was that $\textup{BI}_0(\mathbb{T}^{2}\setminus \{pt\},\omega_0)=
\ham^c(\mathbb{T}^{2}\setminus \{pt\},\omega_0)$.
A symplectic diffeomorphism $\psi$ of $(M,\omega)$ is called
{\em strongly unbounded} if for every $c>0$ there is an $f\in\ham(M,\omega)$
such that the lift of $[\psi,f]$ to $\tilde M$ disjoins a ball of
capacity equal to $c$ from itself. Here $\tilde M$ stands for the universal cover of $M$.

Recall that
the universal cover $(\tilde M,\tilde\omega)$ of $(M,\omega)$ is also a 
symplectic manifold; moreover the projection map $\pi\colon\tilde M\to M$
satisfies $\pi^*(\omega)=\tilde \omega.$  Let $\psi$ be a Hamiltonian diffeomorphism
of $(M,\omega)$ and  $H_t\colon M\to \mathbb{R}$  a Hamiltonian function, whose time-one flow is $\psi$. Then
$H_t\circ\pi$ generates a Hamiltonian flow on $(\tilde M,\tilde\omega)$,
with time-one map $\tilde \psi.$ So defined $\tilde \psi$ is a Hamiltonian diffeomorphism
that lifts $\psi$. According to Z. Han \cite[Lemma 2.1]{han}, 
 every Hamiltonian diffeomorphism
has a unique lift to $(\tilde M,\tilde \omega)$.

The concepts of strongly unbounded symplectic diffeomorphisms and lifts
of Hamiltonian diffeomorphisms are fundamental in the proof of the bounded
isometry conjecture. 
The reason is that using them one
can get large lower bounds for the Hofer norm. By the energy-capacity inequality, 
if $\psi$ is a Hamiltonian diffeomorphism  such that $\psi(A)\cap A=\emptyset$
for $A\subset M$, then
$$
\frac{1}{2}  c_G(A)\leq e(A)\leq \|\psi\|.
$$
Here $e(A)$ is the displacement energy of $A$ and $c_G(A)$  Gromov's capacity
of $A$ ( see \cite{lalonde-mcduff-energy}).
However, this inequality is not enough for closed symplectic manifolds, since
 the capacity $c_G(\cdot)$ is bounded from above. Hence we need to pass to the
universal cover of the symplectic manifold, since on this open symplectic manifold there
are subsets with arbitrary large capacity.

\begin{prop}[Prop. 1.5A in \cite{lalonde-polterovich}]
\label{p:Hoferboundbelow}
If $\psi$ is a Hamiltonian diffeomorphisms of $(M, \omega)$ whose unique lift 
$\tilde\psi:\tilde M\to \tilde M$ disjoins
a ball of capacity $c$ from itself, then $\|\psi\|\geq c/2.$
\end{prop}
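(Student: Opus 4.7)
The plan is to pass everything to the universal cover $(\widetilde M,\widetilde\omega)$, where Gromov's capacity is unbounded, and then invoke the energy--capacity inequality cited just above the proposition.

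First I would unwind the Hofer norm. Let $H_t\colon M\to\mathbb R$ be any smooth time-dependent Hamiltonian whose time-one map equals $\psi$. By the construction recalled in the paragraph preceding the proposition, the pulled-back Hamiltonian $\widetilde H_t := H_t\circ\pi$ generates a Hamiltonian isotopy on $(\widetilde M,\widetilde\omega)$ whose time-one map is precisely the unique lift $\widetilde\psi$ (uniqueness coming from Han's lemma). Because $\pi\colon\widetilde M\to M$ is a surjective covering, the oscillation of $\widetilde H_t$ on $\widetilde M$ equals the oscillation of $H_t$ on $M$; consequently
\[
\int_0^1\bigl(\max_{\widetilde M}\widetilde H_t-\min_{\widetilde M}\widetilde H_t\bigr)\,dt \;=\; \int_0^1\bigl(\max_M H_t-\min_M H_t\bigr)\,dt.
\]

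Now suppose $\widetilde\psi$ disjoins a ball $B\subset\widetilde M$ of Gromov capacity $c_G(B)=c$ from itself. By the definition of displacement energy on the open symplectic manifold $(\widetilde M,\widetilde\omega)$, we obtain
\[
e(B)\;\le\;\int_0^1\mathrm{osc}(\widetilde H_t)\,dt\;=\;\int_0^1\mathrm{osc}(H_t)\,dt.
\]
The Lalonde--McDuff energy--capacity inequality, valid on any symplectic manifold (including the non-compact $\widetilde M$), then yields
\[
\tfrac{c}{2}\;=\;\tfrac12 c_G(B)\;\le\;e(B)\;\le\;\int_0^1\mathrm{osc}(H_t)\,dt.
\]
Taking the infimum over all Hamiltonians $H_t$ generating $\psi$ gives $\|\psi\|\ge c/2$, as desired.

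The main potential pitfall is the middle inequality: we must be sure the displacement energy $e(B)$ on $\widetilde M$ is bounded above by the oscillation of $\widetilde H_t$, even though $\widetilde H_t=H_t\circ\pi$ is not compactly supported on the (possibly non-compact) cover. This is the reason for working with the oscillation (max minus min) definition of the Hofer seminorm rather than with compactly supported Hamiltonians: the standard argument for energy--capacity in \cite{lalonde-mcduff-energy} goes through verbatim for Hamiltonians of finite oscillation, so $e(B)$ in the above sense is still controlled by $\int_0^1\mathrm{osc}(\widetilde H_t)\,dt$. Once this is in place, the rest of the argument is immediate from the two displayed inequalities.
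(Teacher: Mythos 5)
Your proof is correct and follows the same route as the source: the paper does not prove this proposition but quotes it as Prop.~1.5.A of \cite{lalonde-polterovich}, and the intended argument is exactly yours---lift a generating Hamiltonian to $\tilde M$, observe that the oscillation (hence the Hofer length) is unchanged, apply the energy--capacity inequality of \cite{lalonde-mcduff-energy} to the displaced ball, and take the infimum over generating Hamiltonians, using Han's uniqueness of the lift so that every such Hamiltonian produces the same $\tilde\psi$. The one point you flag---that $\tilde H_t=H_t\circ\pi$ is not compactly supported---is most cleanly handled not by asserting that the Lalonde--McDuff proof works verbatim, but by normalizing so that $\min H_t\le 0\le\max H_t$ and cutting $\tilde H_t$ off outside a compact neighborhood of the trace $\bigcup_{t\in[0,1]}\tilde\psi_t(B)$; the cutoff Hamiltonian is compactly supported, has no larger oscillation, and its time-one map still disjoins $B$, so the standard compactly supported energy--capacity inequality on $(\tilde M,\tilde\omega)$ applies and the two displayed inequalities go through unchanged.
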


We will show that the bounded isometry conjecture holds for $\mathbb{T}^{2n}\setminus B(\epsilon_0)$. 
In dimension
two  this was proved   by F. Lalonde and L. Polterovich in \cite{lalonde-polterovich}. Our proof
is just an extension of their arguments. 

We review a couple of facts of \cite{lalonde-polterovich} that we need in the proof of the next
proposition. 
In order to have a clear exposition of the arguments,
instead of considering diffeomorphisms of  $\mathbb{T}^{2n}\setminus B(\epsilon_0)$,
we will consider periodic diffeomorphisms of $\mathbb{R}^{2n}$ minus a small ball centered
at every point of $\mathbb{Z}^{2n}$.
So let $a$ be a small positive number greater than $\epsilon_0$; and for each
 $(k_1,l_1,\ldots,k_n,l_n)\in \mathbb{Z}^{2n}$, consider the small box
$\{ (x_1,y_1,\ldots, x_n,y_n): |x_j- k_j|\leq 3a  \mbox{ and  } |y_j- l_j|\leq 3a \}$.
Denote by $W$ the union of all such boxes as the point $(k_1,l_1,\ldots,k_n,l_n)$ varies
in $\mathbb{Z}^{2n}$.
Let $p:\mathbb{R}\to \mathbb{R}$
 be any
smooth 1-periodic function such that
$$
p =
\left\{
	\begin{array}{ll}
		0  & \mbox{on } [0,4a-2\epsilon],  [4a+2\epsilon, 5a-2\epsilon] \mbox{ and }  [5a+2\epsilon,1] \\
		-1 & \mbox{on }[5a-\epsilon, 5a+\epsilon] \\
		1 & \mbox{on } [4a-\epsilon, 4a+\epsilon] \\
		\mbox{monotone} & \mbox{ on the remaining subintervals of } [0,1].
	\end{array}
\right.
$$
 Here $\epsilon$ is a positive number so
small  that the definition of $p$ make sense.
Finally we also require that
$$
\int_0^1 p(s)ds=0.
$$


\begin{prop}
\label{p:strongunonT}
For any nonzero $v$ in $H_c^1(\mathbb{T}^{2n}\setminus B(\epsilon_0))/
\Gamma_{\mathbb{T}^{2n}\setminus B(\epsilon_0)}$, there is  a
symplectic diffeomorphism $\theta$ with compact support in $\mathbb{T}^{2n}\setminus B(\epsilon_0)$ that is
strongly unbounded and with $\textup{Flux}(\theta)=v$. 
In particular $\textup{BI}_0(\mathbb{T}^{2n}\setminus B(\epsilon_0),\omega)=
\ham^c(\mathbb{T}^{2n}\setminus B(\epsilon_0),\omega)$.
\end{prop}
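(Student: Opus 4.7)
The plan is to extend the two-dimensional construction of Lalonde--Polterovich to arbitrary $n$ with only cosmetic changes, exploiting the fact that all relevant dynamics take place in a single $(x_n, y_n)$-plane. Lift a given nonzero $v \in H^1_c/\Gamma$ to a nonzero class in $H^1_c(\mathbb{T}^{2n}\setminus B(\epsilon_0)) \cong H^1(\mathbb{T}^{2n}) \cong \mathbb{R}^{2n}$. Since coordinate permutations of the torus are symplectomorphisms that permute the basis classes $[dx_j], [dy_j]$, it suffices to handle the case $v = \lambda[dy_n]$ for arbitrary $\lambda \neq 0$; the general case is reduced to this by a linear symplectic change of coordinates.

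For the construction of $\theta$, begin with the constant symplectic vector field $Y_\lambda = \lambda\,\partial_{x_n}$ on $\mathbb{T}^{2n}$, which satisfies $\iota(Y_\lambda)\omega_0 = \lambda\,dy_n$. The associated time-one flow has flux $\lambda[dy_n]$ but does not have compact support on the punctured torus. I correct it by adding a Hamiltonian vector field $X_H$ with $H$ supported near the puncture, so that $X := Y_\lambda + X_H$ vanishes on $B(\epsilon_0)$. The function $p$ from the hypotheses is used to shape $H$ so that the lift $\tilde\theta$ of $\theta := \phi_X^1$ to the universal cover $\tilde M = \mathbb{R}^{2n}\setminus \bigsqcup_{k\in\mathbb{Z}^{2n}} B_k(\epsilon_0)$ acts, on the vertical strips $\{x_n \in [4a-\epsilon,4a+\epsilon]\}$ (taken mod $1$), as translation by $+1$ in the $y_n$-direction, and on $\{x_n \in [5a-\epsilon,5a+\epsilon]\}$ as translation by $-1$. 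The zero-mean condition $\int_0^1 p = 0$ ensures consistency with the quotient to the closed torus.

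To establish strong unboundedness, fix $c > 0$ and choose an integer $N > 2c$. I construct a compactly supported Hamiltonian diffeomorphism $f$ whose lift $\tilde f$ transports a small embedded ball $B_0$ of Gromov capacity $c$ successively across $N$ consecutive $+1$-strips, routing around all the $-1$-strips and all punctures. The lift of $[\theta,f]$ then displaces $B_0$ by a distance of at least $N$ in the $y_n$-direction, so $[\theta,f](B_0)\cap B_0 = \emptyset$. By Proposition \ref{p:Hoferboundbelow}, $\|[\theta,f]\| \geq c/2$, and since $c$ is arbitrary, $\theta$ is strongly unbounded.

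The main obstacle is the careful bookkeeping of the lift of $[\theta,f]$ and the verification that the $N$ successive strip-crossings contribute additively rather than cancel. The cleanly separated $+1$ and $-1$ support intervals of $p$, together with the uniqueness of Hamiltonian lifts recalled before the statement, are precisely what allow $f$ to be routed through only the $+1$-strips, yielding purely additive displacement. Once strong unboundedness holds for every nonzero flux class, the identity $\textup{BI}_0(\mathbb{T}^{2n}\setminus B(\epsilon_0),\omega_0) = \ham^c(\mathbb{T}^{2n}\setminus B(\epsilon_0),\omega_0)$ follows: for any $\psi$ with nonzero flux $v$, write $\psi = \theta_v \circ h$ with $\theta_v$ strongly unbounded and $h$ Hamiltonian, and a standard commutator estimate comparing $[\psi, f]$ with $[\theta_v, h^{-1} f h]$ (using the invariance of the Hofer norm under Hamiltonian conjugation) shows that $\sup_f \|[\psi, f]\| = \infty$, so $\psi \notin \textup{BI}_0$.
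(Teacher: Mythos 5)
Your opening reduction is where the argument breaks. You claim that, after a linear symplectic change of coordinates, it suffices to treat $v=\lambda[dy_n]$. But the linear symplectomorphisms of $\mathbb{T}^{2n}$ are given by integral matrices in $Sp(2n,\mathbb{Z})$, and they act on $H^1(\mathbb{T}^{2n})\cong\mathbb{R}^{2n}$ through integral matrices as well. A class whose coordinates are rationally independent, say $v=[dx_1]+\sqrt{2}\,[dy_1]$ (already in the case $n=1$, and worse in higher dimensions), cannot be sent to a multiple of a single basis covector by any invertible integral matrix: the coefficient equations force two columns of the matrix to be proportional. So your construction only produces strongly unbounded diffeomorphisms whose flux lies on the $Sp(2n,\mathbb{Z})$-orbit of the coordinate lines, a measure-zero set of classes, whereas the proposition — and its later use in Proposition \ref{p:stromgunbM} and in the ``in particular'' statement, via the decomposition $\psi=\theta\circ h$ — requires a strongly unbounded representative with flux \emph{exactly} $v$ for \emph{every} nonzero $v$. (There is also the secondary point that a linear torus map does not preserve the ball $B(\epsilon_0)$, so it is not even a symplectomorphism of the punctured torus without further modification.) The paper avoids any reduction: it realizes an arbitrary tuple $(a_1,b_1,\ldots,a_n,b_n)$ directly, as a composition $\theta=\psi_1\circ\phi_1\circ\cdots\circ\psi_n\circ\phi_n$ of $2n$ compactly supported shear-type maps, one for each generator of $H^1_c$, each equal to a genuine translation outside the boxes $W$ around the lattice points.

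The second half of your argument is also not in a state where it would close. You let the function $p$ shape $\theta$ itself and then ask a Hamiltonian $f$ to carry a capacity-$c$ ball across $N$ of the resulting $\pm1$-strips, asserting that the lift of $[\theta,f]$ then displaces the ball by $N$; no computation of $\theta f\theta^{-1}f^{-1}$ is offered, and it is not clear why the successive crossings add rather than cancel, nor why an embedded ball of arbitrarily large capacity sits inside the cover with its images under the lifted maps controlled. In the paper (following Lalonde--Polterovich) the roles are the other way around: $p$ defines the \emph{test} Hamiltonian isotopy $f_t(x_1,y_1,\ldots)=(x_1,y_1+tp(x_1),\ldots)$, supported away from $W$, which commutes with every factor of $\theta$ except the one acting in the $(x_1,y_1)$-plane; hence $[\theta,f_t]$ collapses to the explicit two-dimensional map $g_t\times 1_{\mathbb{R}^{2n-2}}$, which visibly disjoins a thin rectangle $B_t$ of area growing linearly in $t$, and $B_t\times\mathbb{R}^{2n-2}$ contains embedded balls of capacity equal to that area, so Proposition \ref{p:Hoferboundbelow} and the energy--capacity inequality give $\|[\theta,f_t]\|\to\infty$. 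If you want to salvage your write-up, drop the coordinate reduction, build $\theta$ factor by factor with the prescribed fluxes, and make the commutator computation explicit; the concluding deduction of $\textup{BI}_0=\ham^c$ from the existence of a strongly unbounded representative in each nonzero flux class is fine as you state it.
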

\begin{proof}
Since $H^1_c(\mathbb{T}^{2n}\setminus B(\epsilon_0))=\mathbb{R}^{2n}$ we can find
 generators $e_1,f_1,\dots, e_n,f_n$ of $H^1_c(\mathbb{T}^{2n}\setminus B(\epsilon_0))$ that
are dual to the canonical cycles of the torus. Also let $(a_1,b_1,\ldots,a_n,b_n)$ be a $2n$-tuple of
non negative real numbers not all of which are zero.
We will define $\psi_i,\phi_i\in
\symp^c_0(\mathbb{T}^{2n}\setminus B(\epsilon_0))$ with flux $a_ie_i$ and $b_if_i$ respectively.

Let $a$ and $W$ as above. Consider  
a smooth  1-periodic function $h_{a_j}:\mathbb{R}\to \mathbb{R}$  such that it is
equal to zero on $[0,1/3]$ and $[2/3,1]$, is positive otherwise,
 and satisfies
\begin{eqnarray}
\label{e:integral}
a+\int_0^1 h_{a_j}(s)ds=a_j.
\end{eqnarray}
Similarly for each $j$ we have a function $h_{b_j}$ satistying the same properties with $b_j$ instead of
$a_j$.
Then  define the symplectic diffeomorphisms
$\psi_j$ and $\phi_j$ of $(\mathbb{R}^{2n},\omega_0)$ as
$$
\psi_j(x_1,y_1,\ldots,x_n,y_n)=  (x_1,\ldots,\ \ x_j,\ \ y_j+a+h_{a_j}(x_j), \ \ \ldots,y_n)
$$
and
$$
\phi_j(x_1,y_1,\ldots,x_n,y_n)=  (x_1,\ldots,\ \ x_j+a+h_{b_j}(y_j),\ \  y_j,\ \ \ldots,y_n)
$$
outside $W$ and  fix the points close to each $B(\epsilon_0)$.
The maps $\psi_j$ correspond to the time-one map of the symplectic flow
$$
(x_1,y_1,\ldots,x_n,y_n)\mapsto   (x_1,\ldots, \ \ x_j,\ \ y_j+t(a+h_{a_j}(x_j)),\ \ \ldots,y_n)
$$
and similarly for $\phi_j$.

Basically the  maps $\psi_j$ and $\phi_j$  are translations along the $y_j$-axes and
$x_j$-axes  of $\mathbb{R}^{2n}$ respectively.
From Equation (\ref{e:integral}) it follows that $\textup{Flux}(\psi_j)=a_j$
and $\textup{Flux}(\phi_j)=b_j$.
Then the flux of
$\psi_1\circ \phi_1\circ\cdots\circ \psi_n\circ \phi_n$ is equal to
$a_1e_1+ b_1f_1+\cdots+ a_ne_n +b_nf_n$.  Recall 
$a_j$ and $b_j$ are assumed to  be non negative. If $a_j$ is zero, we define
$\psi_j$ to be  the identity diffeomorphism. If $a_j$ is negative,
we proceed as above  with $-a_j$ instead of $a_j$ and then 
$\textup{Flux}(\psi_j^{-1})=a_j$

We claim that the symplectic
diffeomorphism  $\theta=\psi_1\circ \phi_1\circ\cdots\circ \psi_n\circ \phi_n$ is strongly unbounded.
To see this, consider the symplectic isotopy
$$
f_t(x_1,y_1,\ldots,x_n,y_n)= (x_1,\ \ y_1+tp(x_1), \ldots,x_n,y_n)
$$
where $p:\mathbb{R}\to \mathbb{R}$ is the 1-periodic function defined above.
Since $p$ vanishes on $[0,4a-2\epsilon]$ and $[5a+2\epsilon,1]$, each $f_t$ leaves 
$W$ fixed pointwise.
The zero mean condition on $p$, implies that $\{f_t\}$ is a Hamiltonian isotopy.
Since $f_t$ commutes with $\phi_1,\ \ \psi_2,\ldots, \psi_n$ and $\phi_n$ but not with
 $\psi_1$, we have 
 $[\theta, f_t]=[\psi_1, f_t]$. Note that  $[\theta, f_t]=[\psi_1, f_t]$
is the identity on the last $2n-2$ coordinates of $\mathbb{R}^{2n}$, and in then
$(x_1,y_1)$-plane it corresponds to the symplectic diffeomorphism $g_t$ 
constructed in \cite{lalonde-polterovich},
that is,
$$
[\theta, f_t]=[\psi_1, f_t]=g_t\times 1_{\mathbb{R}^{2n-2}},
$$

Recall from \cite{lalonde-polterovich} that $g_t$ disjoins a rectangle $B_t$
whose area is a function of $t$. Therefore 
$[\theta, f_t](B_t\times \mathbb{R}^{2n-2})\cap (B_t\times \mathbb{R}^{2n-2})=\emptyset$. 
In $\mathbb{R}^2$ the rectangle
$B_t$ is symplectomorphic to a disk of the same area. Since the area of $B_t$ goes to infinity
as $t$ goes to infinity, by the energy-capacity inequality 
the Hofer norm of $[\theta, f_t]=[\psi_1, f_t]$ goes to infinity as $t$ goes to infinity.
Hence $\theta\in \symp^c(\mathbb{T}^{2n}\setminus B(\epsilon_0),\omega)$ 
is strongly unbounded.
\end{proof}

\begin{rem}
It is important to note from the proof of Proposition \ref{p:strongunonT}
that the  $(x_1,y_1)$-plane of $\mathbb{R}^{2n}$  and the Hamiltonian
isotopy $\{f_t\}$ are not related at all to  $v\in H_c^1(\mathbb{T}^{2n}\setminus B(\epsilon_0))/
\Gamma_{\mathbb{T}^{2n}\setminus B(\epsilon_0)}$.
This observation will be useful when we generalize this result to closed symplectic manifolds
that satisfy hypothesis {\em (H)}.
\end{rem}

Before we extend the previous result to  symplectic manifolds that satisfy  {\em (H)}
we need the following lemma. It will be used in order  to show that the strongly unbounded
 diffeomorphism $\theta$ defined in  proof of Proposition \ref{p:strongunonT}, would remain
 strongly unbounded  on $(M,\omega)$ and not only on the open manifold 
$\mathbb{T}^{2n}\setminus B(\epsilon_0)$.

\begin{lem}
\label{l:embeddingstrip}
Let $(M,\omega)$ be a closed manifold with nontrivial $H^1(M)$. Then
there is a symplectic embedding of $((0,\epsilon)\times \mathbb{R}, dx\wedge dy)$
into $(\tilde M,\tilde \omega)$, where $\epsilon>0$ is small.
\end{lem}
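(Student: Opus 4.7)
The plan is to use the hypothesis $H^1(M)\neq 0$ to extract a smoothly embedded loop $\gamma\subset M$ whose homotopy class has infinite order, to thicken $\gamma$ symplectically into a two-dimensional annulus inside $M$, and then to pass to the universal cover $\tilde M$, where this annulus unrolls into the desired infinite strip.

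To produce the loop: since $H^1(M;\mathbb{R})\neq 0$, the abelianization $\pi_1(M)^{\mathrm{ab}}=H_1(M;\mathbb{Z})$ has an element of infinite order, and hence so does $\pi_1(M)$. In dimension $\geq 3$ any smooth representative perturbs to be embedded by transversality, while in dimension $2$ every nontrivial free homotopy class contains a simple closed curve. In either case I obtain an embedded $\gamma\colon S^1\to M$ whose class $[\gamma]$ has infinite order in $\pi_1(M)$.

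To thicken and lift: I build a symplectic embedding $\Phi\colon S^1\times(-\epsilon,\epsilon)\to M$ with $\Phi(x,0)=\gamma(x)$ and $\Phi^\ast\omega=dx\wedge dy$. I pick a smooth vector field $v(x)$ along $\gamma$ with $\omega(\dot\gamma(x),v(x))>0$, which exists globally because the pointwise condition is open, convex, and non-empty. The plane field $E=\mathrm{span}\{\dot\gamma,v\}\subset\gamma^\ast TM$ is a rank-$2$ symplectic sub-bundle over $S^1$, and is trivial because $Sp(2,\mathbb{R})$ is connected. Exponentiating along $v$ with respect to any auxiliary Riemannian metric produces a smooth surface through $\gamma$ tangent to $E$, which is embedded and symplectic after shrinking $\epsilon$; Moser's lemma on this surface then normalizes the area form to $dx\wedge dy$. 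Because $[\gamma]$ has infinite order, the map $\pi_1(\mathrm{Im}\,\Phi)=\mathbb{Z}\to\pi_1(M)$ induced by $\Phi$ is injective, so the composition of $\Phi$ with the universal cover $\mathbb{R}\times(-\epsilon,\epsilon)\to S^1\times(-\epsilon,\epsilon)$ lifts to an injective embedding $\tilde\Phi\colon\mathbb{R}\times(-\epsilon,\epsilon)\hookrightarrow\tilde M$ with $\tilde\Phi^\ast\tilde\omega=dx\wedge dy$; injectivity holds since deck transformations act freely on $\tilde M$. Precomposing $\tilde\Phi$ with the symplectomorphism $(u,v)\mapsto(v,\epsilon/2-u)$ from $(0,\epsilon)\times\mathbb{R}$ onto $\mathbb{R}\times(-\epsilon/2,\epsilon/2)$ yields the desired embedding.

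The step I expect to be most delicate is the symplectic thickening, where a genuine two-dimensional symplectic submanifold containing $\gamma$ must be assembled with the normalized form $dx\wedge dy$ on the nose. Each ingredient---existence of a symplectically complementary vector field along $\gamma$, triviality of rank-$2$ symplectic bundles over $S^1$, and Moser's trick on a surface---is classical, but they must be combined carefully. The remaining steps reduce to the universal coefficient theorem, transversality, and covering-space theory.
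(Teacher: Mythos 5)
Your argument is correct in substance and rests on the same core idea as the paper's proof: use the nontriviality of $H^1(M)$ to produce a noncontractible (infinite-order) loop, take a symplectic tubular neighborhood of it, and pass to the universal cover. The difference is the order of operations. The paper lifts first: it takes an embedded copy of $\mathbb{R}$ in $\tilde M$, observes that its normal bundle is trivial because $\mathbb{R}$ is contractible, and applies the symplectic neighborhood theorem directly in $\tilde M$. You instead thicken the compact loop $\gamma$ inside $M$ into an embedded symplectic annulus $S^1\times(-\epsilon,\epsilon)$ carrying the standard form, and only then lift. Your ordering buys something real: since the core circle is compact, the tubular-neighborhood and Moser steps automatically produce a strip of \emph{uniform} width, whereas a neighborhood of a noncompact line in $\tilde M$ a priori has varying width, and extracting a product $(0,\epsilon)\times\mathbb{R}$ there requires an extra uniformity (deck-equivariance) argument that the paper leaves implicit. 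Your lifting step is complete: injectivity of $\pi_1(S^1\times(-\epsilon,\epsilon))\to\pi_1(M)$ together with the free action of the deck group gives injectivity of the lift, and $\tilde\Phi^*\tilde\omega=dx\wedge dy$ follows from $\pi^*\omega=\tilde\omega$; the Moser step is also unproblematic because you only need an embedding of a standard strip of possibly smaller width, so total area is not an obstruction. One sentence does need repair: it is \emph{not} true that every nontrivial free homotopy class on a closed surface contains a simple closed curve (on $\mathbb{T}^2$ the class $(2,0)$ has no embedded representative, and non-primitive classes in higher genus likewise fail). You do not need that statement: $H^1(M)\neq 0$ forces the closed orientable surface to have genus at least one, so you may simply take an embedded standard generator, which has infinite order in $\pi_1$; in dimension at least three your general-position perturbation is fine as stated.
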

\begin{proof}
Since $H^1(M)$ is nontrivial, there is an embedding $i:\mathbb{R}\to \tilde M$.
Moreover since $\mathbb{R}$ is contractible, the normal bundle $\nu$ is
isomorphic to $\mathbb{R}^{2n-1}\times\mathbb{R}$. Put the canonical symplectic
for on $\nu$. Then there is symplectic diffeomorphims between a neighborhood
of the zero section of $\nu$ and a neighborhood of $i(\mathbb{R})$ in $\tilde M$.
It follows that $\ (0,\epsilon)\times \mathbb{R}$ embeds symplectically into $\tilde M$.
\end{proof}

Since a symplectic diffeomorphism with compact support in $U$ can be thought of as a
symplectic diffeomorphism on $M$,
there is a natural map $\tau:\symp^c_0(U,\omega)\to \symp_0(M,\omega)$.
This gives rise to the  commutative diagram
$\fl_M\circ \tau=j_*\circ \fl_U$, where $j_*: H^1_c(U)\to H^1(M).$
Hence  $j_*(\Gamma_U)$ is a subgroup of $\Gamma_M$.


Then from the commutative diagram
\begin{center}
\begin{picture}(2500,700)(0,0)
\put(60,500){$\symp^c_0(U,\omega) \hskip 3.3cm
  \symp_0(M,\omega)$}
\put(100,10){$H_{c}^1(U)/\Gamma_U \hskip 3.6cm
  H^1(M)/\Gamma_M$}
\put(700,40){\vector(1,0){800}}
\put(790,520){\vector(1,0){540}}
\put(350,430){\vector(0,-1){260}}
\put(1800,430){\vector(0,-1){260}}
\put(50,250){\begin{small}$\fl_U$ \end{small}}
\put(1900,250){\begin{small}$\fl_M $\end{small}}
\put(1000,600){\begin{small}$\tau$ \end{small}}
\put(1000,120){\begin{small}$\tilde j_*$ \end{small}}
\end{picture}
\end{center}
and Proposition \ref{p:strongunonT} we have the following result.

\begin{prop}
\label{p:stromgunbM}
Let $(M,\omega)$ be a closed symplectic manifold that satisfies hypothesis (H).
Then for any nonzero $v$  in $H^1(M)/\Gamma_M$, there is  a
symplectic diffeomorphism $\psi$ that is strongly unbounded and
$\textup{Flux}(\psi)=v$. In
particular Thm. \ref{t:BIO} holds: $\textup{BI}_0(M,\omega)=\ham(M,\omega)$.
\end{prop}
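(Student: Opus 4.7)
The plan is to reduce to the punctured torus case of Proposition \ref{p:strongunonT} by decomposing the flux class using hypothesis (H). Given a nonzero $v \in H^1(M)/\Gamma_M$, I would lift $v$ to $\tilde v \in H^1(M)$ and apply (H) to write $\tilde v = \sum_{k=1}^l j_{k*}(v_k)$ with $v_k \in H_c^1(U_k)$. Passing to the quotients via the commutative diagram $\fl_M \circ \tau_k = \tilde j_{k*} \circ \fl_{U_k}$ displayed just before the proposition, this yields $v = \sum_k \tilde j_{k*}([v_k])$ in $H^1(M)/\Gamma_M$. Since $v$ is nonzero, after relabeling I may assume $\tilde j_{1*}([v_1]) \neq 0$, so in particular $[v_1] \neq 0$.

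Next, I would apply Proposition \ref{p:strongunonT} inside each $U_k \cong \mathbb{T}^{2n}\setminus B(\epsilon_k)$ to produce $\theta_k \in \symp_0^c(U_k,\omega)$ with $\fl_{U_k}(\theta_k) = [v_k]$, chosen strongly unbounded when $[v_k] \neq 0$ and taken to be the identity otherwise. Shrinking each $U_k$ slightly so that they become pairwise disjoint (this preserves $H_c^1(U_k)$ and (H), since each $U_k$ remains a punctured torus), I would extend each $\theta_k$ by the identity via $\tau_k$ to $\bar\theta_k \in \symp_0(M,\omega)$ and set $\psi = \bar\theta_1 \circ \cdots \circ \bar\theta_l$. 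The commutativity of the flux diagram gives $\fl_M(\bar\theta_k) = \tilde j_{k*}([v_k])$, and since flux is a group homomorphism, $\fl_M(\psi) = \sum_k \tilde j_{k*}([v_k]) = v$.

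To prove that $\psi$ is strongly unbounded, given $c > 0$ I would take the Hamiltonian $f_c \in \ham^c(U_1,\omega)$ provided by the proof of Proposition \ref{p:strongunonT} applied to $\theta_1$, for which the unique lift of $[\theta_1, f_c]$ to $\widetilde{U_1}$ displaces a ball of capacity $c$ from itself. Extending $f_c$ by the identity to $\bar f_c \in \ham(M,\omega)$ and using that its support lies in $U_1$, hence is disjoint from the support of $\bar\theta_k$ for $k \neq 1$, an inductive commutator computation gives $[\psi, \bar f_c] = [\bar\theta_1, \bar f_c]$. Lemma \ref{l:embeddingstrip} then supplies a symplectic embedding of the infinite strip $(0,\epsilon)\times\mathbb{R}$ into $\tilde M$, which accommodates the displaced rectangle from the proof of Proposition \ref{p:strongunonT} at any capacity. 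Proposition \ref{p:Hoferboundbelow} yields $\|[\psi,\bar f_c]\| \geq c/2$, so $\psi$ is unbounded; since the nonzero $v$ was arbitrary and $\textup{BI}_0(M,\omega)$ is a subgroup containing $\ham(M,\omega)$, no coset of nonzero flux can lie in $\textup{BI}_0(M,\omega)$, giving $\textup{BI}_0(M,\omega) = \ham(M,\omega)$.

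The main obstacle I anticipate is the lifting step. The universal cover $\widetilde{U_1}$ of a punctured torus does not in general embed symplectically into $\tilde M$ (for instance, when $M$ is a higher-genus surface $\tilde M$ is hyperbolic and cannot contain $\mathbb{R}^2$ minus a lattice of punctures), so the displacing configuration produced by Proposition \ref{p:strongunonT} cannot be transferred wholesale to $\tilde M$. This is precisely what Lemma \ref{l:embeddingstrip} is tailored to handle: combined with the Remark that the displacing $(x_1,y_1)$-plane in Proposition \ref{p:strongunonT} is independent of the flux class, it lets me place the displaced rectangle inside the strip guaranteed by the lemma, whatever $v$ may be.
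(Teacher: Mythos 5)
Your core argument is the same as the paper's: decompose a lift of $v$ using (H), apply Proposition \ref{p:strongunonT} inside a punctured-torus chart to get a strongly unbounded compactly supported diffeomorphism with the right flux, extend by the identity via $\tau$ so that the commutative flux diagram gives $\fl_M(\psi)=v$, and then transfer strong unboundedness to $(M,\omega)$ by combining the Remark (the displacing $(x_1,y_1)$-plane and the isotopy $f_t$ do not depend on $v$) with the strip embedding of Lemma \ref{l:embeddingstrip} and the lower bound of Proposition \ref{p:Hoferboundbelow}; the deduction of $\textup{BI}_0(M,\omega)=\ham(M,\omega)$ from the existence of an unbounded element in every nonzero flux coset is likewise the paper's.

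The one place you go beyond the paper is the explicit treatment of $l>1$, and there your argument has a genuine gap: you ``shrink each $U_k$ slightly so that they become pairwise disjoint,'' asserting this preserves (H). Hypothesis (H) does not assume the $U_k$ are disjoint, and a chart symplectomorphic to $\mathbb{T}^{2n}\setminus B(\epsilon_k)$ cannot in general be made disjoint from an overlapping one by shrinking: to keep condition (b) the shrunken set must still carry compactly supported classes mapping onto the same subspace of $H^1(M)$, which forces it to retain the essential cycles of the torus, so at most a collar near the puncture can be discarded. The disjointness is doing real work in your proof, since the identity $[\psi,\bar f_c]=[\bar\theta_1,\bar f_c]$ requires the supports of $\bar\theta_2,\dots,\bar\theta_l$ (which fill up most of their respective tori) to miss the support of $\bar f_c$ and of $\bar\theta_1$; without it the commutator need not reduce to the displacing map, and strong unboundedness of the composition is no longer clear. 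The paper sidesteps this by proving only the case $l=1$ (``for simplicity''), so your argument coincides with the paper's exactly where the paper gives details, but the added step for general $l$ is not sound as stated; to repair it you would need either a disjointness assumption on the $U_k$ or a different argument producing a strongly unbounded representative of the coset $v$ when the charts overlap.
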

\begin{proof}
Let $v$ in $H^1(M)/\Gamma_M$. Since $(M,\omega)$ satisfies {\em  (H)}, 
we have that $v=v_1+\cdots +v_l$ where $v_r\in j_{r,*}(H^1_c(U_r))$. For
simplicity assume $l=1.$ Thus there is
an open set $U\subset M$  that is symplectomorphic to $\mathbb{T}^{2n}\setminus B(\epsilon_0)$  and
$v_0$ in $H^1_c(U)/\Gamma_U$ nonzero such that $\tilde j_*(v_0)=v$
under the inclusion map $j:U\to M$.
By Proposition \ref{p:strongunonT} there is $\psi\in \symp_0^c(U,\omega)$ 
strongly unbounded and
flux equal to $v_0$.

Consider $\psi$ as a symplectic diffeomorphism in $\symp_0(M,\omega)$.
Thus $\psi$ has flux $v$. It only remains to show that $\psi $ is a strongly unbounded
diffeomorphism of $(M,\omega)$.  Note that $\psi$ is not necessarily
strongly unbounded on $(M,\omega)$ since $\|\psi\|_U\geq \|\psi\|_M$.

By Lemma \ref{l:embeddingstrip}, we have a symplectic embedding of 
$(a,b)\times \mathbb{R}$ into $\tilde M$,  where $(a,b)$ is a small interval.
Recall that the symplectic diffeomorphisms $\psi$ is the one from the
proof of Proposition \ref{p:strongunonT}, except that now we consider it
on $(M,\omega)$. Thus on $(\tilde M,\tilde\omega)$
we have the same symplectic displacement as before. Hence the same arguments
of the proof of Proposition \ref{p:strongunonT} apply in this case. Thus
$\psi$  is strongly unbounded in $\symp_0(M,\omega).$
\end{proof}
\begin{rem}
From this result it follows that  $\textup{BI}_0(\Sigma_g,\omega)= \ham(\Sigma_g,\omega)$,
for $g\geq 1$.
The argument presented here is different from the proof that
appears in \cite{lalonde-polterovich}. But still the heart of our argument is
the same as their approach, namely
Proposition \ref{p:strongunonT}.
\end{rem}

For completeness we recall the following result that we will need later. It corresponds
to Lemma 4.2 from \cite{lalonde-pe}.

\begin{lem}
\label{l:fxgunbounded}
Consider $(M,\omega)$ and $(N,\eta)$  closed symplectic manifolds. If
$\psi\in\symp_0(M,\omega)$ is strongly unbounded, then $\psi\times \phi$
is unbounded for all $\phi\in\symp_0(N,\eta)$.
\end{lem}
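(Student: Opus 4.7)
The plan is to find, for each $c > 0$, a Hamiltonian $F \in \ham(M\times N,\omega\oplus\eta)$ with $\|[\psi\times\phi,F]\|_{M\times N} \geq c/2$, which would establish unboundedness of $\psi\times\phi$. The natural choice is $F = f\times 1_N$, where $f\in\ham(M,\omega)$ is furnished by the strong unboundedness hypothesis on $\psi$: namely, $f$ is selected so that the unique lift $\widetilde{[\psi,f]}$ to $\tilde M$ disjoins a symplectic ball $B_c\subset\tilde M$ of capacity $c$ from itself.

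A direct calculation gives $[\psi\times\phi,\, f\times 1_N] = [\psi,f]\times 1_N$, and by the uniqueness of Hamiltonian lifts (Han's Lemma \cite{han} applied to the universal cover $\widetilde{M\times N} = \tilde M\times\tilde N$), the unique lift of this commutator is $\widetilde{[\psi,f]}\times\textup{id}_{\tilde N}$. The proof then reduces to invoking Proposition \ref{p:Hoferboundbelow} on $(M\times N,\omega\oplus\eta)$: if one exhibits a symplectic ball of capacity $c$ inside $\tilde M\times\tilde N$ that is disjoined from itself by $\widetilde{[\psi,f]}\times\textup{id}_{\tilde N}$, then $\|[\psi,f]\times 1_N\|_{M\times N} \geq c/2$, and letting $c\to\infty$ concludes.

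To produce such a ball, I would start from $B_c\subset\tilde M$ and note that $B_c\times K$ is disjoined from itself by $\widetilde{[\psi,f]}\times\textup{id}_{\tilde N}$ for \emph{any} subset $K\subset\tilde N$, since the $\tilde N$-coordinate is preserved. It then remains to embed a symplectic ball of capacity close to $c$ inside $B_c\times K$ for a suitable choice of $K$ (for example, $K$ containing a sufficiently large Darboux ball in $\tilde N$).

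The main obstacle is Gromov's non-squeezing theorem, which bounds the Gromov capacity of a product $B_c\times K$ by the smaller of the two factor capacities. When $\tilde N$ is compact, as happens whenever $N$ is simply connected and $\tilde N = N$, every Darboux ball in $\tilde N$ has bounded capacity, and the naive product construction only yields balls of bounded capacity in $\tilde M\times\tilde N$. The subtle technical point is to bypass this restriction; following Lalonde--Pestieau \cite{lalonde-pe}, one produces, possibly via non-product symplectic embeddings or a stabilization estimate for the Hofer norm, a set in $\tilde M\times\tilde N$ of arbitrarily large Gromov capacity that $\widetilde{[\psi,f]}\times\textup{id}_{\tilde N}$ disjoins from itself, thereby forcing $\|[\psi,f]\times 1_N\|_{M\times N}$ to grow without bound in $c$.
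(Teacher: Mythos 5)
Your setup is the same as the paper's: take $F=f\times 1_N$ with $f$ given by strong unboundedness of $\psi$, compute $[\psi\times\phi,f\times 1_N]=[\psi,f]\times 1_N$, and try to bound its Hofer norm from below by $c/2$. But the final step is where the actual content lies, and there you leave a genuine gap. You correctly observe that the naive route --- embedding a symplectic ball of capacity close to $c$ into $B_c\times K$ --- is blocked by non-squeezing, and then you defer to Lalonde--Pestieau for ``a set in $\tilde M\times\tilde N$ of arbitrarily large Gromov capacity'' that is disjoined. That is not what their result provides, and no such large-capacity set is needed (nor is it generally available: the product $B_c\times K$ does not acquire large Gromov capacity, which is exactly the problem you ran into). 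The statement that closes the argument is the \emph{stable energy-capacity inequality} of Lalonde--Pestieau: for a ball $B^{2n}(c_0)$ and a closed factor $N$, the \emph{displacement energy} satisfies $e\bigl(B^{2n}(c_0)\times N\bigr)\geq \tfrac{1}{2}\,c\bigl(B^{2n}(c_0)\bigr)$, with no reference to the capacity of the product set. So one does not reduce to Proposition \ref{p:Hoferboundbelow} (which requires disjoining a large ball) at all; instead one notes that the lift $\widetilde{[\psi,f]}\times 1_N$ on $\tilde M\times N$ disjoins $B^{2n}(c_0)\times N$, applies the stabilized inequality to get
$$
\tfrac{c}{2}\leq e\bigl(B^{2n}(c_0)\times N\bigr)\leq \bigl\|\widetilde{[\psi,f]}\times 1_N\bigr\|\leq \bigl\|[\psi,f]\times 1_N\bigr\|=\bigl\|[\psi\times\phi,\,f\times 1_N]\bigr\|,
$$
and lets $c\to\infty$.

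Two smaller points. First, the paper lifts only the $M$-factor, working on the cover $\tilde M\times N$ rather than the universal cover $\tilde M\times\tilde N$; this keeps the second factor closed, which is the setting in which the stabilized inequality is applied. Your insistence on the universal cover and on Han's uniqueness of lifts is unnecessary here and pushes you toward the ball-disjunction formulation that cannot be carried out. Second, since you never disjoin a ball in the product, you also need the elementary inequality that the Hofer norm of a lifted commutator is at most the Hofer norm downstairs (a lifted Hamiltonian has the same oscillation), which is the last link in the chain above; as written, your argument never supplies the bridge from the cover back to $\ham(M\times N,\omega\oplus\eta)$ except through Proposition \ref{p:Hoferboundbelow}, which is precisely the step that fails.
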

\begin{proof}
Let $c$ be a positive real number. Since $\psi$ is strongly unbounded there
is  a Hamiltonian diffeomorphism $h$ of $(M,\omega)$ such that the lift
of $[\psi,h]$ to $\tilde M$ disjoins  a ball $B^{2n}(c_0)$ of capacity $c$. 
Note that $[\psi\times \phi,h\times 1_N]=[\psi,h]\times 1_N$
so the lift $[\psi,h]\tilde\ \  \times 1_N\ \ :\tilde M\times N\to \tilde M\times N$
disjoins $B^{2n}(c_0) \times N$. 

Thus by the stable version of the energy-capacity inequality of F. Lalonde and C. Pestieau
\cite{lalonde-pe}, we get
$$
c/2=c(B^{2n}(c_0))/2  \leq e(B^{2n}(c_0) \times N)\leq\| [\psi,h]\tilde\ \  \times 1_N  \| \leq  \| [\psi,h]  \times 1_N  \|.
$$
Therefore,
$$
\| [\psi\times \phi ,h\times 1_N ] \|=\| [\psi,h]\times 1_N \|\geq
c/2
$$
with $h\times 1_N$ a Hamiltonian diffeomorphism.
Hence $\psi\times \phi $ is unbounded.
\end{proof}

With this result at hand we can prove the following generalization
 of \cite[Theorem 1.3.C]{lalonde-polterovich} and  of
\cite[Lemma 4.3]{lalonde-pe}. 

\begin{thm}
\label{t:prod}
Let $(M,\omega)$ be a symplectic manifold that satisfies hypothesis (H), 
and let  $(N,\eta)$ be any closed symplectic manifold.
If $\psi\times\phi\in\symp_0(M\times N)$ is a bounded symplectic diffeomorphism,
then $\psi$ is a Hamiltonian diffeomorphism of $(M,\omega)$.
\end{thm}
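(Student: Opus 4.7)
The plan is to argue by contradiction using the flux morphism, Proposition \ref{p:stromgunbM}, and Lemma \ref{l:fxgunbounded}. Suppose $\psi$ is not Hamiltonian, so by the exact sequence (\ref{e:exacsecflux}) its flux $v=\fl_M(\psi)\in H^1(M)/\Gamma_M$ is nonzero.

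Since $(M,\omega)$ satisfies (H), Proposition \ref{p:stromgunbM} supplies a strongly unbounded $\tilde\psi\in\symp_0(M,\omega)$ with $\fl_M(\tilde\psi)=v$. Because $\tilde\psi$ and $\psi$ have the same flux, the exact sequence (\ref{e:exacsecflux}) gives a Hamiltonian diffeomorphism $\theta\in\ham(M,\omega)$ with $\psi=\tilde\psi\circ\theta$. Consequently
$$
\tilde\psi\times\phi \;=\; (\psi\times\phi)\circ(\theta^{-1}\times 1_N),
$$
and $\theta^{-1}\times 1_N$ is a Hamiltonian diffeomorphism of $(M\times N,\omega\oplus\eta)$ (its generating Hamiltonian is the pullback of that of $\theta^{-1}$ to the first factor).

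Now I would use that $\textup{BI}_0(M\times N,\omega\oplus\eta)$ is a group that contains $\ham(M\times N,\omega\oplus\eta)$. Since $\psi\times\phi$ is bounded by hypothesis and $\theta^{-1}\times 1_N$ is Hamiltonian, their composition $\tilde\psi\times\phi$ is bounded as well. On the other hand, Lemma \ref{l:fxgunbounded} tells us that because $\tilde\psi$ is strongly unbounded on $(M,\omega)$, the product $\tilde\psi\times\phi$ must be unbounded on $(M\times N,\omega\oplus\eta)$ for every $\phi\in\symp_0(N,\eta)$. This contradicts the previous sentence, so $v=0$ and $\psi\in\ham(M,\omega)$.

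The only nonroutine step is justifying that boundedness is preserved under multiplication by Hamiltonian diffeomorphisms in the above way; this is immediate from the group property of $\textup{BI}_0$ together with $\ham\subset\textup{BI}_0$. The main conceptual obstacle is recognizing that one should compare $\psi$ not to some arbitrary representative but precisely to the strongly unbounded model $\tilde\psi$ produced by Proposition \ref{p:stromgunbM}; once they share the same flux, the stable energy–capacity inequality encoded in Lemma \ref{l:fxgunbounded} does all the work, and the hypothesis on $(N,\eta)$ is only that it is a closed symplectic manifold, which matches the statement.
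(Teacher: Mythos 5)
Your argument is correct and is essentially the paper's own proof: you produce the strongly unbounded representative with the same flux via Proposition \ref{p:stromgunbM}, factor $\psi$ through it by a Hamiltonian using the exact sequence (\ref{e:exacsecflux}), and derive the contradiction from Lemma \ref{l:fxgunbounded} together with the group property of $\textup{BI}_0$ and $\ham\subset\textup{BI}_0$. The only (harmless) difference is that you get $v\neq 0$ directly from the exactness of (\ref{e:exacsecflux}) rather than first invoking Theorem \ref{t:BIO} to call $\psi$ unbounded, a step the paper states but never actually uses; also note your notation $\tilde\psi$ collides with the paper's use of tildes for lifts to the universal cover.
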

\begin{proof}
Assume that $\psi$ is not a Hamiltonian diffeomorphism. By Theorem \ref{t:BIO}
we have $\textup{BI}_0(M,\omega)=\ham(M,\omega)$, so $\psi$ is an  unbounded symplectic
diffeomorphisms. Let $v\in H^1(M)/\Gamma_M$ be the flux of $\psi$.
Since $v$ is nonzero, it follows from  Proposition \ref{p:stromgunbM} that there
is $\psi_0\in\symp_0(M,\omega)$ that is strongly unbounded and has flux equal to $v$.

Therefore there  exists a Hamiltonian diffeomorphisms  $\alpha$ of $(M,\omega)$
such that $\psi=\psi_0\circ\alpha.$ Hence by   Lemma \ref{l:fxgunbounded}
we have that $\psi_0\times \phi$ is unbounded. Hence also
$\psi\times \phi$ is unbounded, which is a contradiction.
Therefore $\psi$ is a Hamiltonian diffeomorphism.
\end{proof}

\section{Proof of the main results}

The proof of Theorem \ref{t:BIO} follows from Proposition \ref{p:stromgunbM} where we showed
that for any $v$ in $H^1(M)/\Gamma_M$ there is an unbounded symplectic diffeomorphism
with flux $v$.

\begin{proof}[Proof of Corollary \ref{c:prodbounded}]
By the flux exact sequence we have  for any $\psi\in\symp_0(M\times N,\omega\oplus
\eta)$ there exist $\theta\in \ham(M\times N,\omega\oplus
\eta)$, $\psi_1\in \symp_0(M,\omega)$ and $\psi_2\in \symp_0(N,\eta)$ such
 that $\psi=\theta\circ (\psi_1\times \psi_2)$. Now if $\psi$ is bounded
it follows that $\psi_1\times \psi_2$ is also bounded. Now by
Theorem \ref{t:prod}, we have that each one of them is a Hamiltonian
diffeomorphism, hence $\psi_1\times \psi_2$ and $\psi$ are also Hamiltonian.
\end{proof}

\begin{proof}[Proof of Theorem \ref{t:prodflux}]
First note that since $(M,\omega)$ and $(N,\eta)$
satisfy {\em (H)},  by Corollary \ref{c:prodbounded} we have
$\textup{BI}_0(M\times N,\omega\oplus\eta)= \ham(M\times N,\omega\oplus\eta)$.

Consider $\psi\in \symp_0(M,\omega)$ and $\phi\in \symp_0(N,\eta)$ such that
$\psi\times \phi$ is a Hamiltonian diffeomorphism.
Thus  $\psi\times \phi$ is a bounded symplectic diffeomorphism. Hence from
Theorem \ref{t:prod} we have that $\psi$ and $\phi$ are Hamiltonian diffeomorphisms
as well. Therefore from Lemma \ref{l:prodHam}, we have that the flux group
of $(M\times N,\omega\oplus\eta)$ splits.
\end{proof}



\end{document}